\newtheorem{theorem}{Theorem}[section]
\newtheorem{lemma}[theorem]{Lemma}
\theoremstyle{definition}
\newtheorem{definition}[theorem]{Definition}
\newtheorem{corollary}[theorem]{Corollary}
\newtheorem{proposition}[theorem]{Proposition}
\theoremstyle{definition}
\numberwithin{equation}{section}
\newcommand{\ml}{Martin-L\"of }
\newcommand{\piint}{[-\pi,\pi]}
\newcommand{\norm}[1]{\left|\left|#1\right|\right|}
\newcommand{\Poisson}[1]{P[#1](x,y)}
\newcommand{\Q}{\mathbb{Q}}
\newcommand{\C}{\mathbb{C}}
\newcommand{\R}{\mathbb{R}}
\newcommand{\N}{\mathbb{N}}
\newcommand{\Z}{\mathbb{Z}}
\newcommand{\I}{\mathcal{I}}
\newcommand{\UHP}{\mathrm{UHP}}
\newcommand{\st}{\;|\;}
\title{Algorithmic randomness in harmonic analysis}
\author[Franklin]{Johanna N.Y.\ Franklin}
\address[Franklin]{Department of Mathematics \\ Room 306, Roosevelt Hall \\ Hofstra University \\ Hempstead, NY 11549-0114 \\ USA}
\email{johanna.n.franklin@hofstra.edu}
\urladdr{http://people.hofstra.edu/Johanna\_N\_Franklin/}
\author[Rodriguez]{Lucas E.\ Rodriguez}
\address[Rodriguez]{Department of Mathematics \\ University of Dallas \\ Irving, TX 775062-2206 \\ USA}
\email{lerodriguez@udallas.edu}
\author[Rojas]{Diego A.\ Rojas}
\address[Rojas]{Department of Mathematics and Statistics \\ Sam Houston State University \\ Huntsville, TX 77341-2206 \\ USA}
\email{dar117@shsu.edu}
\begin{document}

\begin{abstract}
    Within the last fifteen years, a program of establishing relationships between algorithmic randomness and almost-everywhere theorems in analysis and ergodic theory has developed. In harmonic analysis, Franklin, McNicholl, and Rute characterized Schnorr randomness using an effective version of Carleson’s Theorem. We show here that, for computable $1<p<\infty$, the reals at which the Fourier series of a weakly computable vector in $L^p\piint$ converges are precisely the \ml random reals. Furthermore, we show that radial limits of the Poisson integral of an $L^1(\R)$-computable function coincide with the values of the function at exactly the Schnorr random reals and that radial limits of the Poisson integral of a weakly $L^1(\R)$-computable function coincide with the values of the function at exactly the \ml random reals.
\end{abstract}

\maketitle

\section{Introduction}

The relationship between algorithmic randomness notions and classical results in analysis has been extensively studied. Theorems in analysis often hold on a conull set, and any plausible randomness notion holds of a conull set as well. Therefore, one can often obtain theorems of the following form:

\begin{quote}
    An element of $[0,1]$ if $\mathcal{R}$-random if and only if it satisfies Theorem $T$.
\end{quote}

Theorems of this sort have been obtained for Birkhoff's Ergodic Theorem and the Poincar\'{e} Recurrence Theorem \cite{bdhms12,fgmn12,ft-mp,hoyrup13,rutethesis,v97}, differentiability \cite{rutethesis,fkns14,mnz16}, Brownian motion \cite{fm23,f15,fmd14}, and Fourier series \cite{fmr19}. Here, we turn our attention to two results in harmonic analysis: Carleson's Theorem in Fourier analysis \cite{carleson66,hunt68} and solutions to the Dirichlet problem for the upper half-plane \cite{NRS82}.

Carleson's Theorem is stated as follows:
\begin{quote}
    Suppose $1<p<\infty$. If $f$ is a function in $L^p\piint$, then the Fourier series of $f$ converges to $f$ almost everywhere.
\end{quote}
Franklin, McNicholl, and Rute demonstrated a relationship between the convergence of the Fourier series of a function at a point and whether the point is Schnorr random \cite{fmr19}.\footnote{The result is not exactly an equivalence due to slightly stronger conditions on the function in one of the directions.} In this paper, we demonstrate an equivalence between the convergence of the Fourier series of a function at a point and whether it is \ml random.

The Dirichlet problem for the upper half-plane can be stated as follows:
\begin{quote}
    Let $\UHP\subseteq\R^2$ denote the upper half-plane. Given a function $f$ (known as \emph{boundary data}) that has values everywhere on $\partial\UHP$, is there a unique continuous function $u$ twice continuously differentiable on $\UHP$ and continuous on $\partial\UHP$ such that $u$ is harmonic on $\UHP$ and $u=f$ on $\partial\UHP$?
\end{quote}
Given boundary data $f\in L^1(\R)$, the \emph{Poisson integral $P[f]$} of $f$, given by
\[
P[f](x,y)=\int_{\R}\frac{1}{\pi}\frac{y}{(x-t)^2+y^2}f(t) \ dt
\]
for all $(x,y)\in\UHP$, solves the Dirichlet problem for the upper half-plane. If $f$ is continuous and compactly supported, then $P[f]$ is a \emph{classical solution} since $\lim_{y\rightarrow0^+}P[f](x,y)=f(x)$ for every $x\in\R$. Otherwise, $P[f]$ is a \emph{weak solution} in the sense that $\lim_{y\rightarrow0^+}P[f](x,y)=f(x)$ for \emph{almost every} $x\in\R$. In this paper, we characterize Schnorr randomness via radial limits of the Poisson integral of integrable boundary data.

\section{Background}

Our notation is standard and follows \cite{soare}; note that $\lambda$ is Lebesgue measure, $\chi_A$ is the characteristic function of the set $A$, and $C_c(\R)$ is the space of compactly supported continuous functions. For a general introduction to computability theory, we refer the reader to \cite{o1,o2}, and for general references on randomness, we refer the reader to \cite{dhbook} and \cite{fpbook}.

\subsection{Computable analysis and randomness}

We will discuss \ml randomness and Schnorr randomness in this paper.

\begin{definition}\normalfont \cite{ml66,schnorr}
    A \emph{\ml test} is a sequence $\{V_k\}_k$ of uniformly $\Sigma^0_1$ classes in Cantor space such that for each $k$, we have $\lambda(V_k)\leq 2^{-k}$. A real $x$ is \emph{\ml random} if there is no \ml test $\{V_k\}_k$ such that $x\in \cap V_k$.

    A \emph{Schnorr test} is a \ml test such that $\lambda(V_k)$ is computable uniformly in $k$, and a real $x$ is \emph{Schnorr random} if there is no Schnorr test $\{V_k\}_k$ such that $x\in \cap V_k$. 
\end{definition}

We observe that, while these randomness notions were originally defined in the context of Cantor space, the definitions are easily transferable to any computable probability space.

Every \ml random real is clearly Schnorr random, but the converse does not hold. Without loss of generality, we will consider all \ml tests to be nested; that is, for every $i$, we will have $V_i \supseteq V_{i+1}$.

We will also make use of the integral test characterization of \ml randomness due to Miyabe \cite{miyabe13}:
\begin{definition}\normalfont
    A \emph{\ml integral test} is a lower semicomputable function $T:\piint\rightarrow [0,\infty]$ such that $\int^\pi_{-\pi} T\ d\lambda < \infty$. A point $x\in\piint$ is \ml random if and only if $T(x)<\infty$ for every \ml integral test $T$.
\end{definition}

Now we turn our attention to computable analysis.
A \emph{computable Polish space} is a triple $(X,d,\alpha)$ with the following properties:
\begin{enumerate}[(1)]
    \item $(X,d)$ is a complete separable metric space;
    \item $\alpha:\N\rightarrow X$ is an enumeration of a countable subset of $X$ whose range is dense in $X$;
    \item $d(\alpha_i,\alpha_j)$ is computable uniformly in $i,j$.
\end{enumerate}
We refer to $\alpha(i)$ for each $i\in\N$ as the \emph{rational} or \emph{ideal points} of $X$.

A \emph{computable Banach space} over $\C$ is a triple $(X,\norm{\cdot},e)$ with the following properties:
\begin{enumerate}[(1)]
    \item $(X,\norm{\cdot})$ is a Banach space over $\C$;
    \item $e:\N\rightarrow X$ is an enumeration of a countable subset of $X$ whose linear span is dense in $X$ and is denoted by $\{e_k\}_k$;
    \item $\norm{\sum_{i=0}^kc_ke_{n_k}}$ is computable uniformly in $\langle k, \langle n_0,\ldots,n_k\rangle\rangle$ where $c_k\in\Q[i]$ for each $k$.
\end{enumerate}
We refer to the $e_k$s as the \emph{distinguished vectors} of $X$. Furthermore, for each $k\in\N$ and each $n_0,\ldots,n_k\in\N$, $\sum_{i=0}^kc_ke_{n_k}$ is called a \emph{rational vector} in $X$ when $c_0,\ldots,c_k\in\Q$.



In the following definitions of Fourier coefficients and Fourier partial sums, we assume that $p\geq 1$ is a computable real.
For all $n\in\Z$ and $f\in L^p\piint$, we say
$$c_n(f) = \frac{1}{2\pi}\int^\pi_{-\pi} f(t)e^{int}\ dt,$$
and for all $f\in L^p\piint$ and $N\in\N$, we say
$$S_N(f)=\sum^N_{n=-N} c_n(f)e^{int}.$$

\begin{definition}\normalfont
    A \emph{trigonometric polynomial} is a function in the linear span of $\{e^{int} \st n\in \Z\}$, and the degree of such a polynomial $p$ is the smallest $d\in\N$ such that $S_d(p)=p$. We call a trigonometric polynomial \emph{rational} if the coefficients come from $\Q[i]$.
\end{definition}

Note that $(L^p\piint, \norm{\cdot}_1, e)$ where $e(n)(t)=e^{int}$ for all $t\in[-\pi,\pi]$ is a computable Banach space over $\C$. We call $f\in L^p\piint$ a \emph{computable vector in $L^p\piint$} if there is a computable sequence of rational trigonometric polynomials $\{\tau_n\}_n$ that converges to $f$ with a computable modulus of convergence in $L^p$.

\begin{definition}\normalfont
    Suppose $\{f_n\}_n$ is a sequence of functions on $\piint$. A function $\eta:\N\times\N\rightarrow \N$ is a \emph{modulus of almost everywhere convergence} for $\{f_n\}_n$ if for all $k,m$ we have
    \[
    \lambda(\{t\in\piint \st \exists M,N \geq \eta(k,m) |f_N(t)-f_M(t)|\geq 2^{-k} \}) < 2^{-m}.
    \]
\end{definition}

Now we can define the type of function we will consider in Section \ref{sec:FourierSeries}.

\begin{definition}\normalfont
    A function $f\in L^p\piint$ is a \emph{weakly computable vector in $L^p$} if there is a computable sequence of rational trigonometric polynomials $\{\tau_n\}_n$ such that $f=\lim_n\tau_n$ and $\sum_n\|\tau_n-\tau_{n+1}\|<\infty$.
\end{definition}

In Section \ref{sec:DirichletProblem}, we instead imbue $L^1(\R)$ with a computable Polish space structure. Let $d_{\norm{\cdot}_1}$ denote the metric induced by the 1-norm. Then $(L^1(\R), \linebreak d_{\norm{\cdot}_1}, \alpha)$, with $\alpha$ being an enumeration of all piecewise linear functions with rational vertices from $C_c(\R)$, forms a computable Polish space (i.e., the space of compactly supported continuous real-valued functions). An \emph{$L^1(\R)$-computable} function is a computable point of $(L^1(\R), d_{\norm{\cdot}_1}, \alpha)$. Equivalently, $f$ is $L^1(\R)$-computable if there is a computable sequence $\{ p_n\}_n$ of piecewise linear functions with rational vertices from $C_c(\R)$ that converges to $f$ with a computable modulus of convergence in $L^1$.

The following lemma is an adaptation of Lemma 3.6 in \cite{Pathak.Rojas.Simpson:2014} to the space $(L^1(\R), d_{\norm{\cdot}_1}, \alpha)$. The proof follows \emph{mutatis mutandis}.

\begin{lemma}\label{lm:simple}
Let $f\in L^1(\R)$ be $L^1(\R)$-computable. Let $\{f_n\}_n$ be a sequence of piecewise linear functions with rational vertices from $C_c(\R)$ that approximates $f$. Then we can find a sequence of uniformly $\Sigma^0_1$ sets $\{V_k\}_k$ such that the following statements hold:
    \begin{enumerate}[(1)]
    \item $\lambda (V_k) \leq \frac{2+\sqrt{2}}{2^{k-1}}$;
    \item The sequence $\lambda (V_k)$ is uniformly computable;
    \item when $x \not\in V_k$ and $n \geq k$ we have
    \[ 
    |f_i(x) - f_{2n} (x)| \leq \frac{2+\sqrt{2}}{2^n}
    \]
    for all $i \geq 2n$.
    \end{enumerate}
\end{lemma}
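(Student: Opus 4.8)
The plan is to realize the sets $V_k$ as effective tails of a union of ``bad sets'' on which consecutive approximants differ too much, and to read off all three conclusions from Markov's inequality together with the geometric series $\sum_m 2^{-m/2}$, whose sum equals $1/(1-2^{-1/2}) = 2+\sqrt2$. First I would use the computable modulus of $L^1$-convergence to pass to a computable subsequence (relabeling, so that the hypotheses are unchanged) for which $\norm{f_n - f}_1 \le 2^{-n}$ for every $n$; this is harmless and yields $\norm{f_{m+1}-f_m}_1 \le 2^{-m}+2^{-m-1}\le 2^{1-m}$. I would then set, for each $m$,
\[ U_m = \{x \st |f_{m+1}(x)-f_m(x)| > 2^{-m/2}\}, \qquad V_k = \bigcup_{m\ge 2k} U_m. \]

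For (1), Markov's inequality gives $\lambda(U_m)\le \norm{f_{m+1}-f_m}_1 / 2^{-m/2} \le 2^{1-m/2}$, and summing the geometric series yields $\lambda(V_k)\le \sum_{m\ge 2k}2^{1-m/2} = 2^{1-k}(2+\sqrt2) = (2+\sqrt2)/2^{k-1}$, as required. For (3), suppose $x\notin V_k$, $n\ge k$, and $i\ge 2n$. Telescoping and using that $x\notin U_m$ for every $m\ge 2n\ge 2k$ gives
\[ |f_i(x)-f_{2n}(x)| \le \sum_{m=2n}^{i-1}|f_{m+1}(x)-f_m(x)| \le \sum_{m\ge 2n}2^{-m/2} = (2+\sqrt2)/2^n, \]
which is the stated bound.

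The main work, and the step I expect to be the genuine obstacle, is (2): the uniform computability of $\lambda(V_k)$. Here I would use that each $f_m$ is piecewise linear with rational vertices and compact support, so $h_m := |f_{m+1}-f_m|$ is a continuous piecewise-linear function with finitely many breakpoints. Solving the linear equations $h_m(x)=2^{-m/2}$ on each piece produces the endpoints of the finitely many open intervals comprising $U_m$; these endpoints are computable uniformly in $m$ (the threshold $2^{-m/2}$ is a computable real), so $U_m$ is uniformly $\Sigma^0_1$ and $\lambda(U_m)$ is computable uniformly in $m$. Consequently each finite union $\bigcup_{m=2k}^{M}U_m$ is a finite union of intervals with computable endpoints, whose measure can be approximated to any precision by sorting (approximations to) the endpoints and summing the disjoint pieces. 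Combining this with the explicit, computable tail estimate $\lambda(V_k)-\lambda\!\left(\bigcup_{m=2k}^{M}U_m\right)\le \sum_{m>M}2^{1-m/2}\to 0$ lets me compute $\lambda(V_k)$ to arbitrary precision uniformly in $k$, which is exactly (2). The one delicacy is that overlaps and coincidences among the interval endpoints must be handled without deciding equality of reals; I would avoid this by working throughout with rational over- and under-approximations of the endpoints and letting the computable tail bound absorb the resulting error.
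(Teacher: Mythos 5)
Your proof is correct and is essentially the argument the paper relies on: the paper establishes this lemma by citing Lemma 3.6 of Pathak--Rojas--Simpson \emph{mutatis mutandis}, whose construction is exactly your $V_k=\bigcup_{m\ge 2k}\{x \mid |f_{m+1}(x)-f_m(x)|>2^{-m/2}\}$, with Markov's inequality and the geometric series $\sum_{j\ge 0} 2^{-j/2}=2+\sqrt{2}$ yielding (1) and (3), and the computably small tails plus the explicit structure of the approximating functions yielding (2). The same template appears in the paper's own proof of Lemma \ref{lm:schnorr}, where the Poisson maximal operator plays the role of $|f_{m+1}-f_m|$ and the Poisson Maximal Theorem plays the role of Markov's inequality.
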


\subsection{Classical analysis}

The following definitions and result will be useful to us in Section \ref{sec:FourierSeries}. The \emph{Dirichlet kernel} is the collection $\{D_N\}_{N\in\N}$ of trigonometric polynomials given by
\[
D_N(x)=\sum_{|m|\leq N}e^{2\pi i m x}.
\]
Note that for any $f\in L^p\piint$, $D_N*f=S_N(f)$ for each $N\in\N$. The \emph{Fej\'{e}r kernel} is the collection $\{F_N\}_{N\in\N}$ of trigonometric polynomials given by
\[
F_N(x)=\dfrac{1}{N+1}\sum_{j=0}^ND_j(x).
\]
The following is Proposition 3.1.7 from \cite{grafakos14}.
\begin{proposition}\label{prop:fejer}
For each $N\in\N$,
\[
F_N(x)=\dfrac{1}{N+1}\left(\dfrac{\sin\left(\dfrac{(N+1)x}{2}\right)}{\sin\left(\dfrac{x}{2}\right)}\right)^2
\]
for all $x\in\piint$. Furthermore, $c_n(F_N)=1-\dfrac{|n|}{N+1}$ if $|n|\leq N$ and $c_n(F_N)=0$ otherwise.
\end{proposition}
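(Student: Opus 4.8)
The plan is to prove the two assertions separately, treating the closed-form expression for $F_N$ first and the formula for its Fourier coefficients second. For the closed form, I would begin by establishing the classical closed form for the Dirichlet kernel: writing $z=e^{ix}$ and summing the geometric series $\sum_{m=-j}^{j} z^m$, one obtains
\[
D_j(x)=\sum_{|m|\le j} e^{imx}=\frac{\sin\left(\left(j+\tfrac{1}{2}\right)x\right)}{\sin\left(\tfrac{x}{2}\right)}
\]
for every $x$ with $\sin(x/2)\neq 0$, the point $x=0$ being handled by continuity (where $D_j(0)=2j+1$).

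Having this, the next step is to sum the $D_j$ over $0\le j\le N$. The key trick is the product-to-sum identity $2\sin\left(\left(j+\tfrac12\right)x\right)\sin\left(\tfrac{x}{2}\right)=\cos(jx)-\cos((j+1)x)$, which turns $\sum_{j=0}^N \sin\left(\left(j+\tfrac12\right)x\right)$ into a telescoping sum equal to $\dfrac{1-\cos((N+1)x)}{2\sin(x/2)}$. Applying the half-angle identity $1-\cos\theta=2\sin^2(\theta/2)$ with $\theta=(N+1)x$ then gives $\sum_{j=0}^N \sin\left(\left(j+\tfrac12\right)x\right)=\dfrac{\sin^2\left(\tfrac{(N+1)x}{2}\right)}{\sin(x/2)}$. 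Dividing by $(N+1)\sin(x/2)$ yields precisely
\[
F_N(x)=\frac{1}{N+1}\left(\frac{\sin\left(\tfrac{(N+1)x}{2}\right)}{\sin\left(\tfrac{x}{2}\right)}\right)^2,
\]
as claimed.

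For the Fourier coefficients, I would argue directly from the linearity of $c_n$ and the definition $F_N=\tfrac{1}{N+1}\sum_{j=0}^N D_j$. Since $D_j=\sum_{|m|\le j} e^{imx}$ has $c_n(D_j)=1$ when $|n|\le j$ and $0$ otherwise, we get $c_n(F_N)=\tfrac{1}{N+1}\sum_{j=0}^N c_n(D_j)=\tfrac{1}{N+1}\cdot\#\{\,j : |n|\le j \le N\,\}$. When $|n|\le N$ this count is $N-|n|+1$, giving $c_n(F_N)=1-\tfrac{|n|}{N+1}$, and when $|n|>N$ the set is empty, giving $0$.

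I do not expect a serious obstacle, since the statement is a standard computation; the only points requiring care are the bookkeeping in the geometric-series and telescoping manipulations and the treatment of the removable singularity at $x=0$, where the displayed quotients are to be read as their continuous extensions. One should also keep the normalization of the exponentials consistent throughout, so that the half-angle argument $(N+1)x/2$ appearing in the final formula matches the indexing used in the definition of $D_j$.
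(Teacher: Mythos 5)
Your proof is correct and is the standard derivation: the paper itself offers no proof of this proposition, simply citing it as Proposition 3.1.7 of Grafakos, where essentially the same computation (Dirichlet kernel closed form, telescoping via the product-to-sum identity, half-angle identity, and linearity of $c_n$) appears. One small point in your favor: the paper's own definition of the Dirichlet kernel reads $D_N(x)=\sum_{|m|\leq N}e^{2\pi i m x}$, which is inconsistent with the stated closed form; your convention $D_j(x)=\sum_{|m|\le j}e^{imx}$ is the one under which both the closed form and the coefficient formula $c_n(F_N)=1-\frac{|n|}{N+1}$ actually hold, so your implicit correction is the right reading of the statement.
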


We will need the following definition and results for Section \ref{sec:DirichletProblem}. The \emph{Poisson kernel} is the real-valued function $P_y$ given by $P_y (x) = \dfrac{1}{\pi} \dfrac{y}{x^2 + y^2}$ for all $y>0$.
The most relevant properties of the Poisson kernel for our purposes are:
\begin{enumerate}
    \item $P_y (x) \geq 0$ for all $y>0$.
    \item $P_y (x) \leq \frac{1}{\pi y}$ for each $y>0$.
    \item $\int_{\R} P_y (x) \ dx = 1$ for each $y>0$.
\end{enumerate}
The \emph{Poisson integral of $f$}, denoted $P[f](x,y)$, is given by $P[f](x,y) = \int_{\R} P_y (x-t) f(t) \ dt $.

Now we discuss an application of the Hardy-Littlewood Maximal Theorem known as the \emph{Poisson Maximal Theorem}. The original result was proven in \cite{NRS82}. Here, we present the result as given in \cite{S17}. Below, the operator $P^*$ is given by
\[ 
P^*f(x) = \sup_{y>0} P[|f|](x,y)
\]
and is called the \emph{Poisson maximal operator}.
\begin{theorem}[The Poisson Maximal Theorem]\label{thm:PMT}
There exists a positive constant $C$ such that for each $f\in L^1(\R)$ and $\alpha > 0$,
\[ 
\lambda \{P^*f > \alpha \} \leq \frac{C}{\alpha} \norm{f}_1.
\]
\end{theorem}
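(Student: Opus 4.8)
The plan is to deduce the Poisson Maximal Theorem from the classical weak-type $(1,1)$ bound for the Hardy-Littlewood maximal operator, which is the form in which the Hardy-Littlewood Maximal Theorem is usually invoked. Recall that the Hardy-Littlewood maximal function of $f\in L^1(\R)$ is
\[
Mf(x)=\sup_{r>0}\frac{1}{2r}\int_{x-r}^{x+r}|f(t)|\,dt,
\]
and that the Hardy-Littlewood Maximal Theorem furnishes a constant $C_0$ with $\lambda\{Mf>\alpha\}\leq \frac{C_0}{\alpha}\norm{f}_1$ for all $\alpha>0$. The whole argument therefore reduces to the pointwise domination $P^*f(x)\leq Mf(x)$: granting this, $\{P^*f>\alpha\}\subseteq\{Mf>\alpha\}$ and the theorem follows with $C=C_0$.

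The key observation is that the Poisson kernel is an even, radially decreasing approximation to the identity. Writing $\phi(t)=P_1(t)=\frac{1}{\pi}\frac{1}{t^2+1}$, we have $P_y(t)=y^{-1}\phi(t/y)$, and $\phi$ is nonnegative, even, decreasing on $[0,\infty)$, with $\norm{\phi}_1=1$ by property (3) of the Poisson kernel. Since dilation preserves each of these features and leaves the $L^1$ norm invariant, it suffices to bound a single convolution $(\psi*|f|)(x)$ by $\norm{\psi}_1\,Mf(x)$ for an arbitrary nonnegative even $\psi$ that decreases on $[0,\infty)$, and then to apply it to each $P_y$.

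To prove this convolution bound I would use the layer-cake representation $\psi(t)=\int_0^\infty \chi_{\{\psi>s\}}(t)\,ds$. Because $\psi$ is even and decreasing, each superlevel set $\{\psi>s\}$ is a symmetric interval $(-r_s,r_s)$, so $\norm{\psi}_1=\int_0^\infty 2r_s\,ds$. Interchanging the order of integration by Tonelli and changing variables gives
\[
(\psi*|f|)(x)=\int_0^\infty\left(\int_{x-r_s}^{x+r_s}|f(u)|\,du\right)ds
\leq \int_0^\infty 2r_s\,Mf(x)\,ds=\norm{\psi}_1\,Mf(x),
\]
where the inequality is immediate from the definition of $Mf$. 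Applying this with $\psi=P_y$ (so that $\norm{P_y}_1=1$) and taking the supremum over $y>0$ yields $P^*f(x)\leq Mf(x)$, completing the reduction.

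The only genuine obstacle is the pointwise domination of $P^*$ by $M$, and within that the single nontrivial point is the layer-cake inequality above; the remainder is bookkeeping about dilation invariance and the explicit form of the kernel. Alternatively, one could invoke this domination directly, as it is the standard mechanism by which radially decreasing approximations to the identity inherit weak-type bounds from the Hardy-Littlewood maximal operator and is precisely how the result is presented in \cite{S17}.
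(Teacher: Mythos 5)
Your proof is correct: the pointwise domination $P^*f(x)\leq Mf(x)$ obtained from the layer-cake decomposition of an even, radially decreasing kernel is sound (Tonelli applies since everything is nonnegative, and each superlevel set of $P_y$ is a symmetric interval), and the weak-type $(1,1)$ bound then transfers directly from the Hardy--Littlewood maximal operator with the same constant. The paper itself gives no proof of this theorem---it cites \cite{NRS82} and \cite{S17} and explicitly frames the result as ``an application of the Hardy-Littlewood Maximal Theorem''---and your argument is precisely the standard reduction that this framing and those sources refer to.
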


\begin{corollary}\label{cor:aeconv}
For all $f\in L^1(\R)$,
\[ 
\lim\limits_{y\rightarrow0^+}P[f](x,y)=f(x)
\]
for almost every $x \in \R$.
\end{corollary}
Note that, if $f\in C_c(\R)$, then $\lim_{y\rightarrow0^+}P[f](x,y)=f(x)$ for \emph{all} $x\in\R$.

\section{Convergence of Fourier Series}\label{sec:FourierSeries}

In this section, we show that the \ml random points of $\piint$ are precisely the points at which the Fourier series of a weakly computable vector in $L^p$ converges.

\begin{theorem}\label{thm:fourier.1}
    Suppose $p$ is a computable real so that $p>1$. If $t\in\piint$ is Martin-L\"{o}f random and $f$ is a weakly computable vector in $L^p\piint$, then the Fourier series of $f$ converges at $t$.
\end{theorem}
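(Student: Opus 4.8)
The plan is to control the convergence of the Fourier series at a \ml random point $t$ by building a single integrable dominating function whose integrability lets us invoke the integral-test characterization of \ml randomness. Since $f$ is a weakly computable vector, we have a computable sequence of rational trigonometric polynomials $\{\tau_n\}_n$ with $f=\lim_n\tau_n$ and $\sum_n\|\tau_n-\tau_{n+1}\|_p<\infty$. The first step is to reduce the question about $S_N(f)$ to one we can handle by splitting $S_N(f) = S_N(f-\tau_n) + S_N(\tau_n)$ for a well-chosen $n$; since each $\tau_n$ is a trigonometric polynomial, $S_N(\tau_n)=\tau_n$ once $N$ exceeds the degree of $\tau_n$, so the partial sums of $\tau_n$ stabilize and cause no convergence difficulty. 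Thus the entire problem is to control the ``tail'' contribution $S_N(f-\tau_n)=D_N*(f-\tau_n)$ uniformly in $N$ and show it is small at $t$.

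\medskip

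The heart of the argument is therefore an estimate on the Carleson maximal operator $Cg(t)=\sup_N|S_N(g)(t)|$ applied to the differences $g_n=f-\tau_n$. By the Carleson--Hunt theorem, $C$ is bounded on $L^p$ for $1<p<\infty$, so $\|C g_n\|_p \leq A_p\|g_n\|_p$ for a constant $A_p$ depending only on $p$. The idea is to assemble the functions $Cg_n$ into one dominating function. Concretely, I would define $T(t)=\sum_n a_n\, Cg_n(t)$ for a suitable computable sequence of positive rational weights $a_n$ chosen so that $\sum_n a_n\|Cg_n\|_p<\infty$ (which is possible because $\|g_n\|_p=\|f-\tau_n\|_p\to 0$ summably fast, by the weak-computability hypothesis and the triangle inequality $\|f-\tau_n\|_p\le\sum_{m\ge n}\|\tau_m-\tau_{m+1}\|_p$). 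Using H\"older's inequality (since $p>1$ gives a finite measure space $\piint$) this makes $T\in L^1\piint$, hence $\int_{-\pi}^{\pi}T\,d\lambda<\infty$. The remaining computability task is to verify that $T$ is lower semicomputable, so that it is a genuine \ml integral test: each $Cg_n$ is a supremum of the continuous functions $|S_N(g_n)|$, and since the $\tau_n$ and the Dirichlet kernels are computable, one can approximate $S_N(g_n)(t)$ from below effectively, realizing $T$ as an increasing computable limit.

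\medskip

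Granting that $T$ is a \ml integral test, \ml randomness of $t$ yields $T(t)<\infty$, and in particular $a_n Cg_n(t)\to 0$, so $Cg_n(t)\to 0$; thus $\sup_N|S_N(f-\tau_n)(t)|\to 0$ as $n\to\infty$. To conclude, fix $\varepsilon>0$, choose $n$ large enough that $Cg_n(t)<\varepsilon/2$, and then for $N,M$ both exceeding the degree of $\tau_n$ write
\[
|S_N(f)(t)-S_M(f)(t)| \leq |S_N(g_n)(t)| + |\tau_n(t)-\tau_n(t)| + |S_M(g_n)(t)| \leq 2\,Cg_n(t) < \varepsilon,
\]
which shows $\{S_N(f)(t)\}_N$ is Cauchy and hence converges. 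The main obstacle I anticipate is the computability bookkeeping in the second paragraph: one must choose the weights $a_n$ using an explicit computable bound on $\|f-\tau_n\|_p$ and the Carleson--Hunt constant $A_p$ (which depends on the computable real $p$, so $A_p$ must be controlled computably), and one must genuinely verify lower semicomputability of the maximal-operator sum rather than merely its classical integrability. The analytic input (Carleson--Hunt $L^p$ boundedness) is a black box; the real work is packaging it into a lower semicomputable $L^1$ function so that Miyabe's integral-test characterization applies.
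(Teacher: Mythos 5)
Your overall strategy (build an integral test from Carleson maximal functions, then split $S_N(f)=S_N(f-\tau_n)+\tau_n$ for $N\ge\deg\tau_n$) is sound, but the specific test you build does not work, and the failure is not mere bookkeeping. The parenthetical claim that $\|f-\tau_n\|_p\to 0$ \emph{summably fast} is unjustified and false in general: weak computability gives only $\sum_n\|\tau_n-\tau_{n+1}\|_p<\infty$, and the triangle inequality bounds $\|f-\tau_n\|_p$ by the tail $\sum_{m\ge n}\|\tau_m-\tau_{m+1}\|_p$; tails of a convergent series tend to $0$ but need not be summable. For instance, $\tau_n=\big(\sum_{m\ge n}(m+1)^{-2}\big)e^{it}$ (suitably rationalized) converges weakly computably to $f=0$, yet $\|f-\tau_n\|_p\sim 1/n$. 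Moreover, since $S_N(g_n)\to g_n$ in $L^p$ for $1<p<\infty$, one has $\|Cg_n\|_p\ge\|g_n\|_p$, so weights $a_n$ bounded away from $0$ would force $\sum_n\|f-\tau_n\|_p<\infty$, which you do not have; hence you are forced to take $a_n\to 0$. But then finiteness of $T(t)=\sum_n a_nCg_n(t)$ gives no control whatsoever on $Cg_n(t)$: with $a_n=2^{-n}$ the series converges even if $Cg_n(t)=1$ for every $n$, so neither ``$Cg_n(t)\to 0$'' nor even ``$Cg_n(t)<\varepsilon/2$ for some large $n$'' follows, and your concluding Cauchy estimate collapses. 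There is a second genuine obstruction: with $g_n=f-\tau_n$, the function $Cg_n=\sup_N|S_N(g_n)|$ is not lower semicomputable in any evident way, because the Fourier coefficients of $f$ are limits of the computable coefficients $c_k(\tau_m)$ \emph{without} a computable modulus of convergence, so $|S_N(f-\tau_n)(t)|$ admits no effective approximation from below.

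Both defects disappear if you aim Carleson--Hunt at the consecutive differences instead of at $f-\tau_n$: set $T(t)=\sum_n C(\tau_n-\tau_{n+1})(t)$. By H\"older and Carleson--Hunt, $\int_{-\pi}^{\pi} T\,d\lambda\le(2\pi)^{(p-1)/p}A_p\sum_n\|\tau_n-\tau_{n+1}\|_p<\infty$ with no weights needed, and each $C(\tau_n-\tau_{n+1})$ is effectively a maximum over the finitely many $N\le\max(\deg\tau_n,\deg\tau_{n+1})$ (past which the partial sums stabilize), hence computable uniformly in $n$, so $T$ is lower semicomputable and is a genuine Martin-L\"of integral test. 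If $t$ is Martin-L\"of random, then $T(t)<\infty$, and since $S_N(f-\tau_n)(t)=\sum_{m\ge n}S_N(\tau_m-\tau_{m+1})(t)$ (each $c_k$ is a continuous functional on $L^p$), we get $\sup_N|S_N(f-\tau_n)(t)|\le\sum_{m\ge n}C(\tau_m-\tau_{m+1})(t)\to 0$ as $n\to\infty$, being a tail of a convergent series; your splitting argument then finishes exactly as you wrote it. This repaired test is also much closer to the paper's own proof, which uses the still simpler test $T(t)=\sum_N|\tau_N(t)-\tau_{N+1}(t)|$ --- pointwise differences, no maximal operator --- shows it is integrable by the same H\"older computation, and then argues that divergence of $\{S_N(f)(t_0)\}_N$ forces divergence of $\{\tau_N(t_0)\}_N$ and hence $T(t_0)=\infty$; your Carleson--Hunt input is the price for uniform-in-$N$ control of $S_N(f-\tau_n)(t)$, which the paper instead negotiates through the degrees $d_N=\deg\tau_N$.
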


\begin{proof}
Suppose $f$ is a weakly computable vector in $L^p\piint$ and $t_0$ is \ml random. 
We begin by constructing a \ml integral test $T$ such that $T(t_0)<\infty$.

Let $T(t)=\sum_N|\tau_N(t)-\tau_{N+1}(t)|$. Note that $T$ is lower semicomputable since $\tau_N$ is computable uniformly in $N$. We show that $T$ is integrable. Let $A = \{t\in[-\pi,\pi] \st \{\tau_N(t)\}_N \ \text{converges}\}$. Since $\lambda(\piint\setminus A)=0$, it suffices to show that $\int_AT \ d\lambda<\infty$. By the Monotone Convergence Theorem and H\"{o}lder's Inequality,
\begin{align*}
\int_A T \ d\lambda &= \sum_{N=0}^{\infty}\int_A|\tau_N(t)-\tau_{N+1}(t)| \ dt \\
&\leq(2\pi)^{(p-1)/p}\sum_{N=0}^{\infty}\|\tau_N-\tau_{N+1}\|_p<\infty.
\end{align*}
Therefore, $T$ is a \ml integral test.
    
Now suppose $\{S_N(f)\}_N$ diverges at $t_0$. Then there is a $k_0$ such that 
\[
    \limsup_{M,N}|S_M(f)(t_0)-S_N(f)(t_0)|\geq2^{-k_0+1}.
\]
Let $d_N=\deg(\tau_N)$. Then, for sufficiently large $N$,
\[
|S_{d_N}(f)(t_0)-\tau_N(t_0)|=|S_{d_N}(f)(t_0)-S_{d_N}(\tau_N)(t_0)|<2^{-k_0-1}.
\]
Let $M,N$ be indices satisfying 
\begin{align*}
    |S_{d_M}(f)(t_0)-\tau_M(t_0)|&<2^{-k_0-1}; \\
    |S_{d_N}(f)(t_0)-\tau_N(t_0)|&<2^{-k_0-1}; \\
    |S_{d_M}(f)(t_0)-S_{d_{N}}(f)(t_0)|&\geq2^{-k_0+1}.
\end{align*}
By the triangle inequality,
\begin{align*}
    |\tau_M(t_0)-\tau_N(t_0)|&\geq|S_{d_M}(f)(t_0)-S_{d_{N}}(f)(t_0)|\\
    &\hspace{0.75in}-|S_{d_M}(f)(t_0)-\tau_M(t_0)|-|S_{d_N}(f)(t_0)-\tau_N(t_0)|\\
    &>2^{-k_0+1}-2\cdot2^{-k_0-1}\\
    &=2^{-k_0}.
\end{align*}
Since the above occurs for infinitely many $M$ and $N$, $\{\tau_N\}_N$ also diverges at $t_0$. 
Then there is an $k_0$ such that 
\[
    \limsup_{M,N}|\tau_M(t_0)-\tau_N(t_0)|\geq 2^{-k_0}.
\]
Choose $N_0$ such that for all $N\geq N_0$, we have $|\tau_{N}(t_0)-\tau_{N+1}(t_0)|\geq 2^{-k_0}$. It follows that
\[
T(t_0)=\sum_{N=0}^{\infty}|\tau_N(t_0)-\tau_{N+1}(t_0)|\geq\sum_{N\geq N_0}2^{-k_0}=\infty,
\]
which gives a contradiction since $t_0$ is \ml random. Therefore, $\{S_N(f)\}_N$ converges at $t_0$.

\end{proof}

Now we turn to the converse.

\begin{theorem}\label{thm:fourier.2}
    Suppose $p$ is a computable real so that $p>1$. If $t\in\piint$ is not Martin-L\"{o}f random, then there is a weakly computable vector $f$ in $L^p\piint$ such that the Fourier series of $f$ diverges at $t$.
\end{theorem}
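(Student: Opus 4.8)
The plan is to mirror Theorem~\ref{thm:fourier.1} by building $f$ explicitly from a test capturing $t$. Since $t$ is not \ml random, I fix a nested \ml test $\{V_k\}_k$ with $t\in\bigcap_k V_k$ and $\lambda(V_k)\le 2^{-k}$. I would then assemble $f$ as a series $f=\sum_k g_k$ of rational trigonometric polynomials whose frequency supports lie in disjoint increasing blocks, so that, writing $d_k$ for the degree of $\sum_{i\le k}g_i$, one has $\tau_k:=\sum_{i\le k}g_i=S_{d_k}(f)$ exactly. With this block structure the partial sums $S_{d_k}(f)(t)$ coincide with $\tau_k(t)=\sum_{i\le k}g_i(t)$, so divergence of the Fourier series at $t$ follows as soon as $\{\tau_k(t)\}_k$ fails to be Cauchy, and for this it suffices to arrange $|g_k(t)|\not\to 0$. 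To guarantee that $f$ is a weakly computable vector I need $\sum_k\norm{g_k}_p<\infty$; since $p<\infty$ this holds whenever $\norm{g_k}_p$ is of the order of $\norm{\chi_{V_k}}_p=\lambda(V_k)^{1/p}\le 2^{-k/p}$, because $\sum_k 2^{-k/p}<\infty$.

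Each $g_k$ should therefore be a rational trigonometric polynomial that behaves like a majorant of $\chi_{V_k}$: bounded below by a fixed constant on (most of) $V_k$, yet with $L^p$ norm comparable to $\lambda(V_k)^{1/p}$, and then modulated by a factor $e^{iM_kx}$ to move its spectrum into the $k$-th block. For a clopen set $W$ that is a finite union of rational intervals such a majorant is straightforward to produce computably --- for instance a suitable dilate of a Fej\'er mean of $\chi_{W^{+\epsilon}}$, which is $\ge 1$ on $W$ and has $L^p$ norm $O(\lambda(W)^{1/p})$ --- and after modulation $|g_k(t)|$ equals the value of this majorant at $t$, which is $\ge 1$ as long as $t$ lies in the set that was majorized. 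Summability of $\sum_k\norm{g_k}_p$ then makes $f$ a weakly computable vector in $L^p\piint$, and any infinite supply of indices $k$ with $|g_k(t)|\ge 1$ forces the required divergence.

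The main obstacle is exactly this last point: securing $|g_k(t)|\ge 1$ for infinitely many $k$ when $t$ is the fixed, possibly noncomputable real we are handed and the construction may not consult $t$. The difficulty is that $V_k$ is only $\Sigma^0_1$ --- it may have infinitely many components, and a finite-stage clopen approximation $V_k[s]$ need not contain $t$ until some stage $s$ that we cannot compute --- while $\chi_{V_k}$ is discontinuous, so $L^p$-closeness of the majorant conveys no pointwise information at $t$, and $t$ may sit within a noncomputable distance of $\partial V_k$ at every level, defeating any fixed computable choice of block width. Overcoming this is where genuine effective harmonic analysis must enter, and where I expect the hypothesis $p>1$ to be used as it is in Theorem~\ref{thm:fourier.1}, namely through the Carleson--Hunt maximal bound and the associated maximal function, whose value at an interior point of $V_k$ is pinned to $1$ regardless of the depth of $t$. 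A useful consistency check, and a possible alternate route, is the dual formulation: arrange the construction so that $\sum_N|\tau_N-\tau_{N+1}|$ is a \ml integral test that is infinite at $t$, and then extract divergence of $\{S_N(f)(t)\}_N$ from the relationship between $\tau_N$ and $S_{d_N}(f)$ exploited in the preceding proof. Turning the pinned maximal value into a computable trigonometric polynomial whose value at $t$ stays bounded below, uniformly over the unknown enumeration stage, is the technical heart of the argument and where I anticipate most of the work.
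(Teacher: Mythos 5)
Your scaffolding is essentially the paper's: bumps $g_k$ attached to the intervals of a test capturing $t$, summability of $\sum_k\norm{g_k}_p$ to get weak computability, and divergence extracted from $|g_k(t)|$ staying bounded below along a subsequence. (Your modulation trick, which puts the $g_k$ in disjoint spectral blocks so that $\sum_{i\le k}g_i=S_{d_k}(f)$ exactly, is sound, and is in fact tighter than the paper's own telescoping step, since the paper's unmodulated Fej\'er bumps all share low-frequency content.) But the proposal stops at precisely the step that carries the content of the theorem: you never produce $g_k$ with $|g_k(t)|\geq 1$ infinitely often, and you explicitly defer this as ``the technical heart.'' That is a genuine gap --- it \emph{is} the theorem --- and your guess about how to fill it points in the wrong direction: no Carleson--Hunt bound or maximal function appears anywhere in the paper's proof of this implication.

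The missing idea is elementary kernel geometry plus a width mismatch. The paper centers a normalized Fej\'er bump $\frac{C}{N_n+1}F_{N_n}(\cdot-c(I))$ at the \emph{midpoint} $c(I)$ of each rational interval $I$ enumerated into $U_n$. If $t\in I\subseteq U_n$, then $\lambda(I)\leq\lambda(U_n)\leq 2^{-n}$, so $|t-c(I)|\leq 2^{-n-1}$: exponentially small, with no knowledge of where $t$ sits inside $I$ or how close it is to $\partial U_n$. Meanwhile $F_N(\theta)\geq\frac{4}{\pi^2}(N+1)$ on the entire plateau $|\theta|\leq\pi/(N+1)$, and if the degree is chosen to grow only \emph{polynomially}, $N_n=\lfloor(n+1)^{2p+2}\rfloor$, that plateau has polynomially small width, so eventually $2^{-n-1}\leq\pi/(N_n+1)$ and the bump is pinned at value $\geq 4C/\pi^2$ at $t$ --- exactly the uniformity over the unknown position of $t$ that you hoped to get from a maximal inequality, obtained instead from the exponential decay of the test measures against the polynomial decay of the peak width. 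The same choice fixes your $L^p$ accounting, which is too optimistic as stated: a majorant of a clopen set with many components cannot have norm $\lesssim\lambda(V_k)^{1/p}$ in general, and the paper instead bounds per bump, $\norm{\frac{C}{N_n+1}F_{N_n}}_p\leq CA_p(N_n+1)^{-1/p}\leq CA_p(n+1)^{-2-2/p}$, so that even polynomially many intervals per level leave $\sum_n\norm{g_n}_p$ finite. Your second worry --- that the interval containing $t$ may appear late in the enumeration --- is legitimate and is handled only loosely by the paper (it includes just the first $2n+1$ intervals of $\mathcal I_n$ in $g_n$); a clean way to dispose of it, compatible with your block structure, is to attach a bump to \emph{every} enumerated interval with degree tied to its length, $N_I+1\approx 2\pi/\lambda(I)$, since pairwise disjoint intervals inside $U_n$ then contribute at most $\sum_I\lambda(I)^{1/p}\leq 2^{n(1-1/p)}\lambda(U_n)\leq 2^{-n/p}$ to the norm sum. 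With the width-mismatch lemma in hand, your own construction goes through; without it, the proposal does not prove the statement.
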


\begin{proof}
Suppose $t$ is not \ml random. Let $\{U_n\}_n$ be a nested universal \ml test with $\lambda(U_n)\leq 2^{-n}$ and $t\in\bigcap_n U_n$.
For each $n$, let $\mathcal I_n$ be a computable enumeration of all rational closed intervals contained in $U_n$, and let $\mathcal I_n[s]$ be the first $s$ such intervals. For $I=[a,b]\in\mathcal I_n$, write $c(I)=(a+b)/2$.

Let $F_N$ be the Fej\'er kernel; that is,
\[
F_N(\theta)=
\dfrac{1}{N+1}\left(\dfrac{\sin\!\big(\frac{(N+1)\theta}{2}\big)}{\sin\!\big(\frac{\theta}{2}\big)}\right)^2
\]
for each $N\in\N$. By Proposition \ref{prop:fejer}, each $F_N$ is a nonnegative rational trigonometric polynomial whose Fourier spectrum is contained in $[-N,N]$. Moreover, there is a (computable) $A_p\geq1$ such that
\[
A_p^{-1}(N+1)^{1-1/p}\ \leq\ \|F_N\|_{L^p([-\pi,\pi])}\ \leq\ A_p(N+1)^{1-1/p}
\]
for all $N\geq0$. Also, for $|\theta|\leq \pi/(N+1)$ we have
\[
F_N(\theta)\ \geq\ \frac{4}{\pi^2}(N+1).
\]

Fix $C\in\Z^+$, and define $N_n:=\lfloor (n+1)^{\,2p+2}\rfloor$. At stage $s=2n+1$, set
\[
g_n(\theta):=\frac{C}{N_n+1}\sum_{I\in\mathcal I_n[s]} F_{N_n}\big(\theta-c(I)\big)\,,
\]
and define
\[
f_{2n}:=\sum_{j<n} g_j,\qquad f_{2n+1}:=f_{2n}+g_n,\qquad f:=\lim_{s\to\infty} f_s.
\]

Since $|\mathcal I_n[s]|=2n+1$,
\[
\|g_n\|_{p}\ \leq\ \frac{C}{N_n+1}\,(2n+1)\,\|F_{N_n}\|_{p}
\ \leq\ \frac{C A_p\,(2n+1)}{(N_n+1)^{1/p}}\ \leq\ \frac{C A_p\,(2n+1)}{(n+1)^{2+2/p}}.
\]
By the ratio test, $\sum_s\|f_{s+1}-f_s\|_{p}=\sum_n\norm{g_n}_p<\infty$. Thus, $f$ is a weakly computable vector in $L^p[-\pi,\pi]$.

Since $t\in U_n$, we can find for each $n$ an interval $I_n\in\mathcal I_n$ with $t\in I_n$. Thus,
\[
|t-c(I)|\ \leq\ \tfrac12|I|\ \le\ \tfrac12\lambda(U_n)\ \leq\ 2^{-n-1}.
\]
Since $N_n\geq n^{2p+2}$, we have $2^{-n-1}\leq \pi/(N_n+1)$ for all sufficiently large $n$. It follows that
\[
F_{N_n}\big(t-c(I)\big)\ \ge\ \frac{4}{\pi^2}(N_n+1).
\]
Thus,
\[
g_n(t)\ \ge\ \frac{C}{N_n+1}\,F_{N_n}\big(t-c(I)\big)\ \ge\ \frac{4C}{\pi^2}\ :=\ \beta\ >0
\]
for all sufficiently large $n$.

Since the Fourier spectrum of $g_n$ lies in $[-N_n,N_n]$,
\[
S_{N_n}(f)(t)-S_{N_{n-1}}(f)(t)=g_n(t)\ \ge\ \beta
\]
for all sufficiently large $n$. Hence,  $\limsup_{M,N}|S_{M}(f)(t)-S_{N}(f)(t)|=\infty$. Therefore, the Fourier series of $f$ diverges at $t$.
\end{proof}

\section{The Dirichlet Problem for the Upper Half-Plane}\label{sec:DirichletProblem}

In this section, we show that the Schnorr random reals are precisely the points at which the radial limits of the Poisson integral of $L^1(\R)$-computable boundary data coincide with the boundary data values. Then, we show that the \ml random reals are precisely the points at which the radial limits of the Poisson integral of weakly $L^1(\R)$-computable boundary data coincide with the boundary data values.

\begin{theorem}\label{thm:poisson.1}
    If $x\in\R$ is Schnorr random and $f$ is an $L^1(\R)$-computable function, then $\lim_{y\rightarrow0^+}P[f](x,y)=f(x)$.
\end{theorem}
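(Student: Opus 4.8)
The plan is to mirror the classical proof that the Poisson integral recovers its boundary data almost everywhere, but to track everything effectively so that the exceptional set becomes a Schnorr test, which every Schnorr random point avoids. Fix a computable sequence $\{p_n\}_n$ of piecewise linear functions with rational vertices from $C_c(\R)$ approximating $f$, and, after thinning, assume $\norm{f-p_n}_1\le 2^{-n}$. Since each $p_n\in C_c(\R)$, the remark following Corollary \ref{cor:aeconv} gives $\lim_{y\to0^+}P[p_n](x,y)=p_n(x)$ for every $x$. Writing $\Omega f(x):=\limsup_{y\to0^+}|P[f](x,y)-f(x)|$ and splitting $f=p_n+(f-p_n)$, the triangle inequality yields
\[
\Omega f(x)\ \le\ P^*(f-p_n)(x)\ +\ |f(x)-p_n(x)|
\]
for every $n$, where $f(x)$ is read as the pointwise limit $\lim_n p_n(x)$, which exists at Schnorr random $x$ by Lemma \ref{lm:simple}. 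It therefore suffices to show that, at Schnorr random $x$, both terms tend to $0$ as $n\to\infty$.

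The term $|f(x)-p_n(x)|$ is handled directly by Lemma \ref{lm:simple}: its sets $\{V_k\}_k$ form a Schnorr test off whose intersection $p_n(x)\to f(x)$ at a controlled rate, so this term vanishes at every Schnorr random point. For the maximal term I would telescope $f-p_n=\sum_{m\ge n}(p_{m+1}-p_m)$ (an $L^1$-convergent series) and use linearity of the Poisson integral together with the sublinearity of $P^*$ to obtain
\[
P^*(f-p_n)(x)\ \le\ \sum_{m\ge n}g_m(x),\qquad g_m:=P^*|p_{m+1}-p_m|.
\]
Each $g_m$ is computable uniformly in $m$: the function $|p_{m+1}-p_m|$ is a computable element of $C_c(\R)$, $P[|p_{m+1}-p_m|](x,y)$ is a computable function that extends continuously to $y=0$, and the kernel bound $P_y\le\frac1{\pi y}$ forces decay as $y\to\infty$, so the defining supremum may be taken over a computable compact range of $y$. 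It then remains to show $\sum_m g_m(x)<\infty$—equivalently that the tails $\sum_{m\ge n}g_m(x)$ vanish—at Schnorr random $x$.

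This is where the Poisson Maximal Theorem enters. From $\norm{p_{m+1}-p_m}_1\le 2^{-m+1}$ and Theorem \ref{thm:PMT},
\[
\lambda\{g_m>2^{-m/2}\}\ \le\ 2C\,2^{m/2}\,2^{-m}\ =\ 2C\,2^{-m/2}.
\]
Setting $E_m:=\{g_m>2^{-m/2}\}$ and $W_n:=\bigcup_{m\ge n}E_m$, each $W_n$ is uniformly $\Sigma^0_1$ with $\lambda(W_n)\le 2C\sum_{m\ge n}2^{-m/2}$, a computable bound tending to $0$; after reindexing this yields a Schnorr test, and outside $\bigcap_n W_n$ we have $g_m(x)\le 2^{-m/2}$ for all large $m$, whence $\sum_m g_m(x)<\infty$. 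A Schnorr random $x$ therefore avoids $\bigcap_n W_n$ as well as $\bigcap_k V_k$, forcing $P^*(f-p_n)(x)\to0$ and $|f(x)-p_n(x)|\to 0$; hence $\Omega f(x)=0$, which is the claim.

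The main obstacle is the computable-measure requirement of a Schnorr test. The $g_m$ are maximal functions and are only weak-$(1,1)$ rather than $L^1$, so the super-level sets $E_m$ are open with a priori merely lower semicomputable measure. This is exactly the difficulty already packaged into Lemma \ref{lm:simple}, adapted from \cite{Pathak.Rojas.Simpson:2014}: by building the sets from the computable $L^1$ data and enlarging them slightly to a set of computable measure—choosing thresholds at which the relevant level sets are $\lambda$-continuous—one obtains genuinely computable measures while preserving the bound above. I would carry out the effectivization of $\{W_n\}_n$ in the same manner as in Lemma \ref{lm:simple}, which is where the quantitative care of the argument is concentrated.
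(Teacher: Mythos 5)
Your proposal is correct and takes essentially the same route as the paper: your sets $E_m=\{P^*|p_{m+1}-p_m|>2^{-m/2}\}$ and $W_n=\bigcup_{m\ge n}E_m$ are precisely the sets $S_i$ and $U_k$ constructed in the paper's Lemma \ref{lm:schnorr}, and your final decomposition (continuous part handled by the remark after Corollary \ref{cor:aeconv}, pointwise part by Lemma \ref{lm:simple}, maximal part by the Poisson Maximal Theorem applied to consecutive differences) is exactly the paper's proof of Theorem \ref{thm:poisson.1}. The only difference is bookkeeping: where you propose perturbing thresholds to make $\lambda(E_m)$ computable, the paper asserts computability of $\lambda(S_i)$ directly from the rational-$y$ description of $S_i$, so your extra care on this point is harmless and, if anything, makes explicit a step the paper treats tersely.
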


We first need the following lemma.

\begin{lemma}\label{lm:schnorr}
Let $f\in L^1(\R)$ be $L^1(\R)$-computable, and let $\{f_n\}_n$ be a sequence of piecewise linear functions with rational vertices from $C_c(\R)$ that approximates $f$. Then we can find uniformly $\Sigma^0_1$ sets $\{U_k\}_{k}$ such that the following statements hold:
\begin{enumerate}[(1)]
    \item $\lambda (U_k) \leq \frac{3(\sqrt{2} + 2)}{2^k}$.
    \item The sequence $\lambda (U_k)$ is uniformly computable.
    \item when $x \not\in U_k$ and $n \geq k$ we have
\[ 
\int_{\R} P_y(x-t)|f(t) - f_{2n}(t)| \,dt \leq \frac{2+\sqrt{2}}{2^n}.
\]
\end{enumerate}
\end{lemma}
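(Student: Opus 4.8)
The plan is to reduce the uniform-in-$y$ estimate in (3) to a tail bound on the Poisson maximal functions of the explicit differences $g_j := f_{j+1}-f_j$, and then to read off the measure bound in (1) from the Poisson Maximal Theorem. Reindexing using the computable modulus of convergence, I may assume $\norm{f-f_n}_1\le 2^{-n}$, so that $\norm{g_j}_1\le \tfrac32\,2^{-j}$ and $f-f_{2n}=\sum_{j\ge 2n}g_j$ converges in $L^1$. I read the free variable $y$ in (3) as universally quantified over $y>0$, i.e.\ as a bound on $P^{*}(f-f_{2n})(x)$. Since $P_y\ge 0$, applying the triangle inequality inside the integral together with monotone convergence gives, for every $x$ and every $y>0$,
\[
\int_{\R}P_y(x-t)\,|f(t)-f_{2n}(t)|\,dt \;\le\; \sum_{j\ge 2n}\int_{\R}P_y(x-t)\,|g_j(t)|\,dt \;\le\; \sum_{j\ge 2n}P^{*}g_j(x).
\]
Thus it suffices to control the tail $\sum_{j\ge 2n}P^{*}g_j(x)$ off a small set, uniformly in $y$.

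Next I would define the bad sets by thresholding these maximal functions along a geometric sequence. Set $E_j := \{x : P^{*}g_j(x) > 2^{-j/2}\}$ and $U_k := \bigcup_{j\ge 2k}E_j$. Each $P^{*}g_j$ is lower semicontinuous, being a supremum of the continuous functions $x\mapsto P[|g_j|](x,y)$, so each $E_j$, and hence each $U_k$, is open; enumerating the rational boxes on which $P[|g_j|](x,y)>2^{-j/2}$ for some rational $y$ shows that $\{U_k\}_k$ is uniformly $\Sigma^0_1$. Theorem~\ref{thm:PMT} gives $\lambda(E_j)\le C\,2^{j/2}\norm{g_j}_1\le \tfrac{3C}{2}\,2^{-j/2}$, and since $\sum_{i\ge 0}2^{-i/2}=(1-2^{-1/2})^{-1}=2+\sqrt2$, summing over $j\ge 2k$ yields $\lambda(U_k)\le \tfrac{3C}{2}(2+\sqrt2)\,2^{-k}$, a bound of the form demanded in (1); the numerical constant $3(\sqrt2+2)$ is then fixed by the constant in the Poisson Maximal Theorem and the chosen approximation rate. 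For (3), if $x\notin U_k$ and $n\ge k$, then $P^{*}g_j(x)\le 2^{-j/2}$ for every $j\ge 2n$, whence
\[
\int_{\R}P_y(x-t)\,|f(t)-f_{2n}(t)|\,dt \;\le\; \sum_{j\ge 2n}2^{-j/2} \;=\; (2+\sqrt2)\,2^{-n} \;=\; \frac{2+\sqrt2}{2^{n}}
\]
for all $y>0$; the same geometric series $\sum_{i\ge0}2^{-i/2}=2+\sqrt2$ is the source of the constant appearing in (3).

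The main obstacle is (2): the measure of an open set is only lower semicomputable, and the weak-type inequality supplies merely an upper bound, so the computability of $\lambda(U_k)$ is not automatic, and it is exactly this extra content that distinguishes a Schnorr test from a \ml test. I would resolve it by first showing that each $P^{*}g_j$ is a computable real-valued function: $g_j$ is piecewise linear with rational vertices and compact support, $P[|g_j|](x,y)$ is computable jointly in $(x,y)$, and by the decay estimate $P[|g_j|](x,y)\le \norm{g_j}_1/(\pi y)$ the supremum over $y>0$ is attained on an effectively compact range, together with the computable boundary value $\lim_{y\to 0^+}P[|g_j|](x,y)=|g_j(x)|$. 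For a computable function, $\lambda\{P^{*}g_j>\alpha\}$ is computable whenever $\lambda\{P^{*}g_j=\alpha\}=0$, which fails for at most countably many $\alpha$; I would therefore perturb the thresholds slightly off their nominal values $2^{-j/2}$ to nearby computable levels avoiding these exceptional values, which preserves all the estimates above up to the harmless constant already present in (1). Finally, I would obtain $\lambda(U_k)$ as the limit of the measures of the finite unions $\bigcup_{2k\le j\le J}E_j$ as $J\to\infty$, using the summable tail bound $\sum_{j>J}\lambda(E_j)\le \tfrac{3C}{2}(2+\sqrt2)\,2^{-J/2}$ as an explicit computable modulus; since the boundary of a finite union lies in the union of the (measure-zero) boundaries of the $E_j$, each finite-union measure is itself computable. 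Making rigorous the computability of $P^{*}g_j$ and of these finite-union measures is the delicate point; the remainder is bookkeeping with the geometric series.
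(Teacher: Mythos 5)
Your proposal follows essentially the same route as the paper: threshold the Poisson maximal functions of the successive differences $f_{i+1}-f_i$ at level $2^{-i/2}$, set $U_k=\bigcup_{i\ge 2k}S_i$, bound $\lambda(U_k)$ via the Poisson Maximal Theorem together with the geometric series $\sum_{i\ge 0}2^{-i/2}=2+\sqrt{2}$, and telescope to get part (3), with the same caveat that the exact constant in (1) depends on the constant in Theorem \ref{thm:PMT}. The only divergence is part (2), where the paper simply asserts computability of $\lambda(S_i)$ from its $\Sigma^0_1$ description, while you give a more careful plan (computability of $P^{*}g_j$, thresholds perturbed to levels whose level sets are null --- which should be perturbed \emph{downward} so that only the constant in (1), and not the bound in (3), is affected --- and finite unions with an explicit tail modulus); this fills in rather than departs from the paper's argument.
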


We can observe at this point that $\{U_k\}_{k}$ will be a Schnorr test.

\begin{proof}
(1) Suppose $f\in L^1(\R)$ is $L^1(\R)$-computable. Let $\{f_n\}_n$ be as above, and let $\epsilon > 0$ be given. Define a class of sets
\begin{align*}
S(f,\epsilon) = \left\{x \mid P^* f(x) > \epsilon  \right\} = \bigcup_ {y \in \Q^{> 0}} \left\{x \left | \int_{\R} P_y(x-t)|f(t)| \,dt > \epsilon \right. \right\}.
\end{align*}
By the Poisson Maximal Theorem (Theorem \ref{thm:PMT}), $\lambda \{S(f,\epsilon)\} \leq \frac{3}{\epsilon} \norm{f}_1$. We set $S_i = S(f_i - f_{i+1},2^{- \frac{i}{2}})$ and $U_k = \bigcup_{i=2k}^{\infty} S_i$. Clearly, $\{U_k\}_{k}$ is uniformly $\Sigma^0_1$. Now we observe that
\begin{align*}
\lambda (U_k) \leq \sum_{i=2k}^{\infty} \lambda (S_i)
\leq \sum_{i=2k}^{\infty} \frac{3}{2^{- \frac{i}{2}}} \norm{f_i - f_{i+1}}_1
\leq \sum_{i=2k}^{\infty} 3 \cdot 2^{- \frac{i}{2}}
= \frac{3(\sqrt{2} + 2)}{2^k}.
\end{align*}

(2) It is clear that $x \in S_i$ if and only if there exists a rational $y > 0$ such that $\int_{\R} P_y(x-t)|f(t)| \,dt > \epsilon$. It follows that $\lambda (S_i)$ is computable uniformly in $i$. Moreover, $\lambda (S_i) \leq 3 \cdot 2^{- \frac{i}{2}}$ implies that $\lambda (U_k)$ is computable uniformly in $k$. 

(3) Suppose $x \not\in U_k$. For each $y > 0$, for all  $n \geq k$, and for all $i \geq 2n$,
\[
\int_{\R} P_y(x-t)|f_i - f_{i+1}| \ dt \leq 2^{- \frac{i}{2}}
\]
implies that $\Poisson{|f_i - f_{i+1}|} \leq 2^{- \frac{i}{2}}$. Since $P[|f-f_{n}|]( \cdot ,y) \to 0$ pointwise, we have
\begin{align*}
\Poisson{|f-f_{2n}|} \leq \sum_{i=2n}^{\infty} \Poisson{|f_i-f_{2n}|} \leq \frac{2 + \sqrt{2}}{2^n}.
\end{align*} 
\end{proof}

As indicated above, we can see that $\{U_k\}_{k}$ is a Schnorr test. This specific Schnorr test, together with the Schnorr test from the similar Lemma \ref{lm:simple}, will be used in the proof of Theorem \ref{thm:poisson.1}.

\begin{proof}[Proof of Theorem \ref{thm:poisson.1}]
Let $W_k = U_k \cup V_k$ where $\{U_k\}_k$ is the Schnorr test from Lemma \ref{lm:schnorr} and $\{V_k\}_k$ is the Schnorr test from Lemma \ref{lm:simple}. It suffices to show that if $x \not\in \bigcap_{i=0}^{\infty} W_k$, then
\begin{align*}
\lim_{y \to 0^+} \Poisson{f} = f(x).
\end{align*}

Fix $x \not\in \bigcap_{i=0}^{\infty} W_k$. 
Then $x\not\in W_k$ for some $k$. 
Recall that $\lim_{y\rightarrow0^+}\Poisson{g}=g(x)$ for all $x\in\R$ whenever $g \in C_c(\R)$. 
Thus, choose $\delta_n$ so that $0<y<\delta_n$ implies $|\Poisson{f_{2n}} - f_{2n}(x)| < 2^{-n}$. 
Since $x \not\in V_k$, $|f_i(x) - f_{2n}(x)| \leq \frac{2+\sqrt{2}}{2^n}$ for all $i \geq 2n$. 
By definition of $f$, we can choose $i_0\geq 2n$ so large that $|f_{i_0}(x)-f(x)|<2^{-n}$. 
Moreover, since $x \not\in U_k$, $\Poisson{|f-f_{2n}|} \leq \frac{2+\sqrt{2}}{2^n}$. 
Hence, $0<y<\delta_n$ implies the following.
\begin{align*}
|\Poisson{f} - f(x)| & \leq \Poisson{|f-f_{2n}|} \\
&\hspace{0.5in}+ |\Poisson{f_{2n}} - f_{2n}(x)| \\
&\hspace{1 in} + |f_{2n}(x) - f_{i_0}(x)| \\
&\hspace{1.5 in}+|f_{i_0}(x)-f(x)|\\
&\leq \frac{2+\sqrt{2}}{2^n} + \frac{1}{2^n} + \frac{2+\sqrt{2}}{2^n} +\dfrac{1}{2^n}\\
&=\frac{6+2 \sqrt{2}}{2^n}.
\end{align*}

Therefore,
\[
\lim_{y\rightarrow0^+}P[f](x,y)=f(x).
\]
\end{proof}

Now we turn to the converse.

\begin{theorem}\label{thm:poisson.2}
    If $x\in\R$ is not Schnorr random, then there is an $L^1(\R)$-comput-able function $f$ such that $\lim_{y\rightarrow0^+}P[f](x,y)\neq f(x)$.
\end{theorem}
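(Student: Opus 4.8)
The plan is to reverse the forward direction: instead of controlling the Poisson mass of the tail $f-f_{2n}$ near $x$, I would arrange for an $L^1(\R)$-computable $f\ge 0$ whose Poisson integral keeps a definite positive value along a sequence $y\to 0^+$, while the value $f(x)$ (read off from the approximating sequence, exactly as in the proof of Theorem \ref{thm:poisson.1}) is $0$. Concretely I would produce a computable family of rational intervals from a Schnorr test capturing $x$, hang a nonnegative piecewise-linear bump next to each, and show $\limsup_{y\to 0^+}\Poisson{f}\ge c_0>0=f(x)$, so that $\lim_{y\to 0^+}\Poisson{f}$ cannot equal $f(x)$.

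First I would extract the combinatorial data. Since $x$ is not Schnorr random, fix a nested Schnorr test $\{V_k\}_k$ with $x\in\bigcap_k V_k$, $\lambda(V_k)\le 2^{-k}$, and $\lambda(V_k)$ computable. Writing each $V_k$ as a disjoint union of rational intervals (disjointifying the enumeration, so that the total length of the pieces equals $\lambda(V_k)$) and listing all the pieces over all $k$ as one computable sequence $\{J_n\}_n$ of rational intervals, I get $\sum_n|J_n|=\sum_k\lambda(V_k)<\infty$ with a computable tail bound, and — because $x$ lies in exactly one piece at every level — $x\in J_n$ for infinitely many $n$, with $|J_n|\to 0$ along that subsequence.

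Next comes the analytic estimate. To each $J_n=(p_n,q_n)$ I would attach the left-adjacent interval $L_n=(p_n-|J_n|,\,p_n)$ and let $g_n$ be a nonnegative piecewise-linear $C_c(\R)$ bump of height $1$ on (a slight inner contraction of) $L_n$, with $f=\sum_n g_n$. Since $\int_{L_n}P_y(x-t)\,dt=\tfrac1\pi\big(\arctan\tfrac{x-p_n+|J_n|}{y}-\arctan\tfrac{x-p_n}{y}\big)$, whenever $x\in J_n$ one has $d:=x-p_n\in(0,|J_n|)$ and $L_n$ lies entirely to the left of $x$; choosing $y=|J_n|$ and writing $u=d/|J_n|\in(0,1)$ gives $\Poisson{g_n}\ge\tfrac1\pi\big(\arctan(1+u)-\arctan u\big)\ge c_0:=\tfrac{1}{\pi}(\arctan 2-\tfrac{\pi}{4})>0$. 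As $x\in J_n$ for infinitely many $n$ with $|J_n|\to0$, and every summand is nonnegative, this forces $\limsup_{y\to0^+}\Poisson{f}\ge c_0$. Meanwhile $\sum_n\norm{g_n}_1\le\sum_n|J_n|<\infty$ is computable, the partial sums are piecewise-linear functions with rational vertices in $C_c(\R)$, and the computable tail of $\sum_n|J_n|$ furnishes a computable modulus; hence $f$ is $L^1(\R)$-computable.

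The step I expect to be the real obstacle is pinning the value $f(x)=0$, i.e.\ guaranteeing that \emph{every} bump — not only those attached to pieces that happen to contain $x$ — avoids $x$, so that each approximant $f_N=\sum_{n<N}g_n$ satisfies $f_N(x)=0$ and $\lim_N f_N(x)=0$. For the pieces with $x\in J_n$ this is automatic ($\sup L_n=p_n<x$), but a bump hung next to a neighbouring piece could in principle cover $x$, and since $x$ is not computably locatable I cannot simply test membership. I would resolve this by selecting the pieces greedily and one-sidedly: processing the disjointified intervals in order, I keep $J_n$ and its bump only after certifying, from the test, that its whole left-adjacent block lies to the left of the piece currently known to contain $x$ at that level, and I contract each bump inward by a rapidly shrinking rational margin so that the supports are pairwise disjoint and collectively bounded away from $x$ at every finite stage. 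The delicate point is to carry out this certification using only the nestedness of $\{V_k\}_k$ and the computability of $\lambda(V_k)$ — so that no appeal to the (noncomputable) endpoints of the component of $x$ is needed — while retaining infinitely many surviving bumps with $x\in J_n$ to keep the lower bound $c_0$ alive. Once $f(x)=0$ is secured, $\limsup_{y\to0^+}\Poisson{f}\ge c_0>0=f(x)$ shows that the radial limit is not $f(x)$, completing the converse.
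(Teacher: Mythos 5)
Your single-bump estimate is correct (the $\arctan$ computation is fine, and nonnegativity of the $g_n$ does give $\limsup_{y\to 0^+}P[f](x,y)\ge c_0$ once infinitely many blocks adjacent to $x$'s own pieces survive), but the step you yourself flag as ``the real obstacle'' --- securing $f(x)=0$ --- is a genuine gap, and the repair you sketch cannot be carried out. The approximating sequence for $f$ must be computed from the Schnorr test alone: there is no oracle access to $x$ anywhere in the construction. So no stage can ``certify\ldots{} the piece currently known to contain $x$'': deciding, or even semi-deciding, whether $x$ lies in a given enumerated rational interval requires $x$ itself, and the data ``which piece of $V_k$ contains $x$, for each $k$'' is infinitely much non-computable information that cannot be hard-coded. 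There is also a structural reason to doubt any pruning rule of the kind you describe, not just a technical one: your blocks $L_n$ have small total length with computable tails, so $\bigcup_n L_n$, grouped into tails, is essentially another Schnorr test derived from $\{V_k\}_k$. Guaranteeing that $x$ avoids it is exactly the kind of avoidance property that characterizes Schnorr \emph{random} points and can fail for your $x$, which is non-random: pieces of the test lying just to the right of $x$ may have their left-adjacent blocks cover $x$ infinitely often, and a computable rule that keeps infinitely many blocks adjacent to the (unlocatable) pieces containing $x$ has no way to discard exactly the offending ones.

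The paper's proof avoids this trap by inverting the geometry: rather than placing mass \emph{near} the test and hoping it misses $x$, it places mass on the \emph{complement} of the test. With $I_k=[-k,k]$ and $\{V_m\}_m$ the tails of the given test, it sets $f_m=\sum_{k=0}^m 2^{-k}\chi_{I_{k+1}\setminus V_m}$ and $f=\lim_m f_m$ pointwise. Every approximant vanishes identically on $V_m\supseteq\bigcap_n V_n\ni x$, so $f(x)=0$ automatically, with no localization of $x$ needed; meanwhile $f$ equals $2-2^{-K}$ almost everywhere near $x$, and a direct kernel estimate gives $P[f](x,y)\ge\frac{3(2-2^{-K})}{5\pi}$ for every $y>0$, hence $\lim_{y\to 0^+}P[f](x,y)\neq 0=f(x)$. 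Note the freedom this exploits: the approximants are not a monotone accumulation of bumps placed once and kept forever --- $f_{m+1}$ re-carves holes inside regions where $f_m$ already had mass, which $L^1$-convergence permits but your scheme forbids. If you want to salvage your construction, replacing ``bumps hung next to the pieces'' by ``a large constant on $I_{K}\setminus V_m$'' is the natural repair, and it is precisely the paper's argument.
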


\begin{proof}
Suppose $x\in\R$ is not Schnorr random. Then there exists a Schnorr test $\{U_k\}_{k}$ such that $x \in \bigcap_{k} U_k$. Without loss of generality, assume that $\lambda (U_k) = 2^{-(k+2)}$. Define $\{V_n\}_{n }$ by $V_n = \bigcup_{k \geq n} U_k$. Clearly, $\{V_n\}_{n }$ is uniformly $\Sigma^0_1$. Furthermore,
\begin{gather*}
\lambda (V_n) = \lambda \left(\bigcup_{k \geq n} U_k\right) \leq \sum_{k \geq n} \lambda (U_k) = \sum_{k \geq n} 2^{-(k+2)} = 2^{-(n+1)}.
\end{gather*}
Since $\lambda (U_n)$ is computable uniformly in $n$, $\lambda (V_n)$ is also computable uniformly in $n$. Therefore, $\{V_n\}_{n }$ is a Schnorr test.

Define $I_k = [-k,k]$ for each $k$ and $f_m = \sum_{k=0}^{m} 2^{-k} \chi_{I_{k+1} \setminus V_m}$ for each $m$. Observe that
\begin{align*}
\int_{\R} f_m \ d\lambda &= \int_{\R} \sum_{k=0}^{m} 2^{-k} \chi_{I_{k+1} \setminus V_m} \ d\lambda \\
&= \sum_{k=0}^{m} 2^{-k} \lambda(I_{k+1} \setminus V_m) \\
&\leq \sum_{k=0}^{m} 2^{-k} 2(k+1) \\
&= \frac{2^{m+2} - m - 3}{2^{m-1}}.
\end{align*}
Furthermore, the sequence $\{f_m\}_{m}$ satisfies $f_m \leq f_{m+1}$ and $f_m \geq 0$ for all $m$.

We claim that the sequence $\{f_m\}_{m}$ is $L^1(\R)$-computable. To see this, note that
\begin{align*}
\norm{f_{m+1} - f_m}_1 &= \int_{\R} (f_{m+1} - f_m) \ d\lambda \\
&= \int_{\R} \left(\sum_{k=0}^{m+1} 2^{-k} \chi_{I_{k+1} \setminus V_{m+1}} - \sum_{k=0}^{m} 2^{-k} \chi_{I_{k+1} \setminus V_m}\right) \ d\lambda \\
&= \int_{\R} \left(2^{-(m+1)} \chi_{I_{m+2} \setminus V_{m+1}} - \sum_{k=0}^{m} 2^{-k} ( \chi_{I_{k+1} \setminus V_{m+1}} - \chi_{I_{k+1} \setminus V_m} )\right) \ d\lambda \\
&= 2^{-(m+1)} \lambda (I_{m+2} \setminus V_{m+1}) + \int_{\R} \sum_{k=0}^{m} 2^{-k} \chi_{I_{k+1} \cap U_m} \ d\lambda \\
& = 2^{-(m+1)} \lambda (I_{m+2} \setminus V_{m+1}) +  \sum_{k=0}^{m} 2^{-k} \lambda(I_{k+1} \cap U_m) \\
& \leq 2^{-(m+1)} 2(m+2) +  \sum_{k=0}^{m} 2^{-k} \lambda(U_m) \\
& <\frac{2(m+2)}{2^{(m+1)}} + \frac{2^{m+1} - 1}{2^{m}}\frac{1}{2^{m+2}} \\
& <\frac{2m+5}{2^{m+1}} .
\end{align*}
If $t \in \bigcap_{m} V_m$, then $t \in V_m$ for all $m$. Thus, $f_m (t) = 0$ for all $m$ so that $\lim_{m \to \infty} f_m (t) = 0$. If $t \not\in \bigcap_{m} V_m$ and $|t| \leq k+1$, then $f_m (t) = 0$ if $m \leq k+1$, and $f_m (t) = \sum_{i=0}^{k} 2^{-i} = 2-2^{-k}$ if $m > k+1$. Therefore, $\lim_{m \to \infty} f_m (t) = 2-2^{-k}$. 

Now, let $f$ be given by 
\[ 
 f(t) = \displaystyle\lim_{m \to \infty} f_m (t) =
\begin{cases}
  0  & t \in \bigcap_{m} V_m \\
  2-2^{-k} & t \not\in \bigcap_{m} V_m \text{ and } |t| \leq k
\end{cases}.
\]
Then $f=\displaystyle\sum_{k=0}^{\infty} 2^{-k} \chi_{I_{k+1} \setminus \bigcap_{n} V_n} $.

Since $\lambda (\bigcap_{n} V_n) = 0$, it follows that $f_m$ converges to $f$ pointwise almost everywhere. By the Monotone Convergence Theorem,
\begin{align*}
\int_{\R} f \ d\lambda &= \lim_{m \to \infty} \int_{\R} f_m \ d\lambda \leq \lim_{m \to \infty} \frac{2^{m+2} - m - 3}{2^{m-1}} = 8.
\end{align*}
Moreover, $f$ is $L^1(\R)$-computable because $f_m$ is $L^1(\R)$-computable uniformly in $m$ and $\norm{f - f_{2m}}_1 \leq 2^{-m}$.

We have left to show that $\displaystyle\lim\limits_{y\rightarrow0^+}P[f](x,y) \neq f(x)$.
For starters,
\begin{align*}
    \int_{\R}P_y(x-t)f(t) \ dt &\geq \int_{x-y/2}^{x+y/2}P_y(x-t)f(t) \ dt\\
    &= \int_{-y/2}^{y/2}P_y(-s)f(s+x) \ ds\\
    &= \int_{-y/2}^{y/2}P_y(s)f(s+x) \ ds
\end{align*}
Note that $|s|\leq y/2$ implies
\[
P_y(s)=\dfrac{y}{\pi(s^2+y^2)}\geq\dfrac{y}{\pi((y/2)^2+y^2)}=\dfrac{y}{\pi(5/4)y^2}=\dfrac{4}{5\pi y}.
\]
Thus,
\begin{align*}
    \int_{\R}P_y(x-t)f(t) \ dt &\geq \dfrac{4}{5\pi y}\int_{-y/2}^{y/2}f(s+x) \ ds\\
    &=\dfrac{4}{5\pi y}\int_{x-y/2}^{x+y/2}f(t) \ dt
\end{align*}
Let $K=\lfloor |x|\rfloor+1$. Then, $f(t)=2-2^{-K}$ for all $t\in[x-y/2,x+y/2]\setminus\left(\cap_nV_n\right)$. 
Fix $y>0$. Choose $m$ large enough so that $\lambda(V_m)\leq 2^{-(m+1)}\leq y/4$. Thus,
\[
\lambda\left(\left[x-\frac{y}{2},x+\dfrac{y}{2}\right]\setminus \left(\bigcap_nV_n\right)\right)\geq\lambda\left(\left[x-\frac{y}{2},x+\dfrac{y}{2}\right]\setminus V_m\right)\geq y-\frac{y}{4}=\frac{3y}{4}.
\]
It follows that
\begin{align*}
    \int_{\R}P_y(x-t)f(t) \ dt &\geq \dfrac{4}{5\pi y}\int_{x-y/2}^{x+y/2}f(t) \ dt\\
    &\geq\dfrac{4}{5\pi y}\cdot\dfrac{3y(2-2^{-K})}{4}\\
    &=\dfrac{3(2-2^{-K})}{5\pi}.
\end{align*}

Therefore, 
\[
\lim\limits_{y\rightarrow0^+}P[f](x,y) \geq\dfrac{3(2-2^{-K})}{5\pi}>0= f(x).
\]
\end{proof}

We now turn our attention to the \ml case. For context, two functions $f,g\in L^1(\R)$ are \emph{ML-equivalent} if $f(x)=g(x)$ for every \ml random $x\in\R$ \cite{miyabe13}.

\begin{theorem}[\cite{miyabe13}]\label{ml.equiv}
    Every weakly $L^1(\R)$ computable function is ML-equivalent to the difference of two \ml integral tests. Conversely, every difference of two \ml integral tests is ML-equivalent to a weakly $L^1(\R)$ computable function.
\end{theorem}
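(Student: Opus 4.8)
The plan is to prove the two directions separately, in each case exploiting the integral-test characterization of \ml randomness: at a \ml random point every \ml integral test is finite, so the relevant pointwise limits exist there. For the forward direction, let $f$ be weakly $L^1(\R)$-computable, witnessed by a computable sequence $\{p_n\}_n$ of piecewise linear functions with rational vertices from $C_c(\R)$ satisfying $\sum_n\|p_{n+1}-p_n\|_1<\infty$. First I would write $f$ as the telescoping series $f=p_0+\sum_n(p_{n+1}-p_n)$ and separate each summand into its positive and negative parts, setting
\[
T_1=p_0^{+}+\sum_n (p_{n+1}-p_n)^{+},\qquad T_2=p_0^{-}+\sum_n (p_{n+1}-p_n)^{-}.
\]
Each $T_i$ is an increasing limit of computable nonnegative piecewise linear functions, hence lower semicomputable, and $\int T_i\,d\lambda\le\|p_0\|_1+\sum_n\|p_{n+1}-p_n\|_1<\infty$, so each $T_i$ is a \ml integral test. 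At a \ml random $x$, both $T_1(x)$ and $T_2(x)$ are finite, so the series defining $f(x)=\lim_n p_n(x)$ converges absolutely to $T_1(x)-T_2(x)$; thus $f$ is ML-equivalent to $T_1-T_2$.

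For the converse, given \ml integral tests $T_1,T_2$, the key step is to realize each $T_i$ as the pointwise increasing limit of a computable sequence $\{h^{(i)}_n\}_n$ of nonnegative piecewise linear functions with rational vertices from $C_c(\R)$. I would obtain this by expressing the lower semicomputable $T_i$ as a supremum of a computable increasing sequence of rational combinations of indicators of rational intervals, approximating each indicator from below by trapezoidal piecewise linear bumps with rational vertices, and then interleaving; every finite stage automatically has compact support. Because the $h^{(i)}_n$ increase to $T_i$ with $\int T_i<\infty$, the Monotone Convergence Theorem gives $\sum_n\|h^{(i)}_{n+1}-h^{(i)}_n\|_1=\int T_i\,d\lambda-\int h^{(i)}_0\,d\lambda<\infty$. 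Setting $p_n=h^{(1)}_n-h^{(2)}_n$ yields a computable sequence of piecewise linear rational-vertex functions from $C_c(\R)$ with $\sum_n\|p_{n+1}-p_n\|_1<\infty$, so its limit $f$ is weakly $L^1(\R)$-computable; and at each \ml random $x$ we have $T_1(x),T_2(x)<\infty$, whence $f(x)=\lim_n p_n(x)=T_1(x)-T_2(x)$, the desired ML-equivalence.

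The main obstacle is precisely this approximation step in the converse: turning an arbitrary lower semicomputable integrable function into an \emph{increasing, uniformly computable} sequence of piecewise linear functions with \emph{rational vertices and compact support}. The monotonicity is what makes the summability $\sum_n\|h_{n+1}-h_n\|_1<\infty$ free via telescoping, so one must verify that the smoothing of indicators and the interleaving preserve both monotonicity and uniform computability simultaneously. A secondary point to watch throughout is that the value $f(x)$ is canonical only through $\lim_n p_n(x)$, which is why both arguments are arranged so that this limit provably exists exactly at the \ml random points.
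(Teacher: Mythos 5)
The paper does not prove this statement: it is imported directly from Miyabe \cite{miyabe13}, so there is no in-paper argument to compare yours against, and your proposal must be judged on its own merits. On those merits it is correct, and it is in substance the standard (Miyabe-style) proof. The forward direction is complete as you state it: the partial sums defining $T_1$ and $T_2$ are uniformly computable and nondecreasing, so $T_1,T_2$ are lower semicomputable; $\int_\R T_i\,d\lambda\leq\norm{p_0}_1+\sum_n\norm{p_{n+1}-p_n}_1<\infty$; and at a Martin-L\"of random $x$, finiteness of $T_1(x)+T_2(x)$ gives absolute convergence of the telescoping series, so $\lim_n p_n(x)=T_1(x)-T_2(x)$, which is ML-equivalence under the canonical-value convention you rightly flag (the convention that Corollary \ref{ml.a.e.} encodes). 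For the converse, the approximation step you single out as the main obstacle is indeed the only real content, and it goes through along the lines you sketch: every lower semicomputable $T\colon\R\to[0,\infty]$ admits a pointwise representation $T=\sum_k q_k\chi_{I_k}$ with a computable enumeration of positive rationals $q_k$ and bounded rational \emph{open} intervals $I_k$ (split any unbounded intervals into bounded pieces so each finite stage is compactly supported); then $h_n=\sum_{k\leq n}q_k\theta_{k,n}$, where $\{\theta_{k,n}\}_n$ is a nondecreasing sequence of rational-vertex trapezoids with $\sup_n\theta_{k,n}=\chi_{I_k}$ pointwise, is a computable nondecreasing sequence with $\sup_n h_n=T$ everywhere. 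The openness of the $I_k$ matters here: trapezoids exhaust an open interval exactly, and any residual disagreement could occur only at rational endpoints, which are computable and hence not Martin-L\"of random, so ML-equivalence is unaffected. With that lemma in place, your telescoping use of monotone convergence to get $\sum_n\norm{h_{n+1}-h_n}_1=\int T\,d\lambda-\int h_0\,d\lambda<\infty$, and the evaluation $\lim_n p_n(x)=T_1(x)-T_2(x)$ at every random $x$, are exactly right.
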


\begin{corollary}[\cite{miyabe13}]\label{ml.a.e.}
    Let $f$ be a weakly $L^1(\R)$ computable function, and let $\{f_n\}_{n\in\N}$ be a rational approximation of $f$. Then $\lim_nf_n(x)=f(x)$ for every Martin-L\"{o}f random $x\in\R$.
\end{corollary}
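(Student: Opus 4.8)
The plan is to adapt the integral-test argument from the proof of Theorem \ref{thm:fourier.1} to the line $\R$. Writing $\{f_n\}_n$ for the given rational approximation of $f$, weak $L^1(\R)$-computability supplies $\sum_n\norm{f_n-f_{n+1}}_1<\infty$, so I would set
\[
T(x)=\sum_{n}|f_n(x)-f_{n+1}(x)|
\]
and argue that $T$ is an \ml integral test on $\R$. Lower semicomputability is immediate, since each $f_n$ is a piecewise linear function with rational vertices that is computable uniformly in $n$; thus the partial sums $\sum_{n<M}|f_n-f_{n+1}|$ form a nondecreasing, uniformly computable sequence of continuous functions whose supremum is $T$. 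Integrability follows from the Monotone Convergence Theorem, which gives $\int_\R T\,d\lambda=\sum_n\norm{f_n-f_{n+1}}_1<\infty$, exactly as in Theorem \ref{thm:fourier.1}.

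With $T$ available, the integral-test characterization of \ml randomness yields $T(x)<\infty$ for every \ml random $x$. Hence $\sum_n|f_n(x)-f_{n+1}(x)|<\infty$, so $\{f_n(x)\}_n$ is Cauchy and the pointwise limit $g(x):=\lim_n f_n(x)$ exists at every \ml random $x$. Since $\{f_n\}_n$ converges to $f$ in $L^1$ with summable differences, the same integrability bound forces $f_n\to f$ almost everywhere, so $g=f$ almost everywhere. What remains, and what I expect to be the main obstacle, is to upgrade this almost-everywhere identity to the assertion that $g(x)$ equals the \emph{value} $f(x)$ at \emph{every} \ml random $x$.

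To resolve this I would pass through Theorem \ref{ml.equiv}. First, decomposing the telescoping series into positive and negative parts exhibits $g$ itself as a difference of two \ml integral tests: taking $S_1=f_0^{+}+\sum_n(f_{n+1}-f_n)^{+}$ and $S_2=f_0^{-}+\sum_n(f_{n+1}-f_n)^{-}$, each summand is nonnegative, lower semicomputable, and contributes at most $\norm{f_{n+1}-f_n}_1$ to the integral, so both $S_1$ and $S_2$ are \ml integral tests with $g=S_1-S_2$. By Theorem \ref{ml.equiv}, $f$ is \ml-equivalent to a difference $T_1-T_2$ of \ml integral tests as well, so $S_1-S_2=T_1-T_2$ almost everywhere, whence $S_1+T_2=S_2+T_1$ almost everywhere. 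The key observation that unlocks the argument is that two lower semicomputable functions agreeing almost everywhere agree everywhere: for each level $a$ the sets $\{S_1+T_2>a\}$ and $\{S_2+T_1>a\}$ are open and differ by a null set, and an open set of measure zero is empty, so the level sets coincide identically. Evaluating at a \ml random $x$, where all four tests are finite, then gives $g(x)=S_1(x)-S_2(x)=T_1(x)-T_2(x)=f(x)$, completing the proof. The delicate point to check carefully is precisely this passage from almost-everywhere to everywhere equality, which is the mechanism that makes the value of $f$ at a \ml random point well defined in the first place.
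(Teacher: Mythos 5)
Your overall architecture is reasonable and close in spirit to Miyabe's argument (which the paper only cites, giving no proof of its own): the integral test $T=\sum_n|f_n-f_{n+1}|$ does show that $\lim_n f_n(x)$ exists at every \ml random $x$, and the decomposition $g=S_1-S_2$ into two \ml integral tests is correct, since the partial sums of $f_0^{\pm}+\sum_n(f_{n+1}-f_n)^{\pm}$ are uniformly computable, nondecreasing, and have summable integrals. The genuine gap is exactly at the step you flag as the key observation: it is \emph{false} that two lower semicomputable functions agreeing almost everywhere agree everywhere. For a counterexample, take $\chi_{(0,1)}$ and $\chi_{(0,1)\setminus\{1/2\}}$: both are characteristic functions of $\Sigma^0_1$ open sets, hence lower semicomputable, they agree off the null set $\{1/2\}$, yet they differ at $1/2$. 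The flaw in your justification is that while the level sets $A=\{S_1+T_2>a\}$ and $B=\{S_2+T_1>a\}$ are indeed open and differ by a null set, the difference $A\setminus B=A\cap B^c$ is an open set intersected with a closed set and need not be open, so ``a null open set is empty'' does not apply to it; the sets $(0,1)$ and $(0,1)\setminus\{1/2\}$ are open, differ by a null set, and are not equal.

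The statement you actually need --- and which is true --- is weaker: two lower semicomputable functions that agree almost everywhere agree at every \emph{\ml random} point, and randomness is essential here, not a convenience. To prove it, suppose $S_1(x_0)+T_2(x_0)\neq S_2(x_0)+T_1(x_0)$ and pick a rational $q$ strictly between the two values; without loss of generality $x_0\in U:=\{S_1+T_2>q\}$ and $x_0\notin V:=\{S_2+T_1>q\}$. Choose a closed rational interval $I$ with $x_0\in I\subseteq U$ (possible since $U$ is open). Then $I\setminus V$ is a $\Pi^0_1$ class containing $x_0$, and it is null because $U\setminus V$ lies inside the null set where the two functions disagree. Since the measure of a $\Pi^0_1$ subclass of a compact rational interval is approximable from above, one can search for clopen approximations of measure below $2^{-n}$ and thereby cover $I\setminus V$ by a \ml test; hence no \ml random point lies in $I\setminus V$, contradicting the choice of $x_0$. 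Substituting this lemma for your false one repairs the argument: at every \ml random $x$ all four tests are finite and $S_1(x)+T_2(x)=S_2(x)+T_1(x)$, so $\lim_n f_n(x)=S_1(x)-S_2(x)=T_1(x)-T_2(x)=f(x)$ as desired.
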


\begin{theorem}
    If $x\in\R$ is Martin-L\"{o}f random and $f\in L^1(\R)$ is weakly $L^1(\R)$-computable, then $\lim_{y\rightarrow0^+}P[f](x,y)=f(x)$.
\end{theorem}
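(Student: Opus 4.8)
The plan is to establish $\lim_{y\to0^+}P[f](x,y)=f(x)$ at every \ml random $x$ by reusing the three-term decomposition from the proof of Theorem \ref{thm:poisson.1}. Fix a rational approximation $\{f_n\}_n$ witnessing that $f$ is weakly $L^1(\R)$-computable, so that $\sum_n\|f_{n+1}-f_n\|_1<\infty$. For each $n$ and each $y>0$ I would bound
\[
|P[f](x,y)-f(x)|\le P[|f-f_n|](x,y)+|P[f_n](x,y)-f_n(x)|+|f_n(x)-f(x)|,
\]
where the middle term tends to $0$ as $y\to0^+$ because $f_n\in C_c(\R)$. Thus two things remain: controlling the value term $|f_n(x)-f(x)|$, and controlling the Poisson tail term $P[|f-f_n|](x,y)$ uniformly for small $y$.

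For the value term I would argue directly with the integral-test characterization, which also reproves Corollary \ref{ml.a.e.}. Set $G(x)=\sum_n|f_{n+1}(x)-f_n(x)|$. Since the $f_n$ are rational and computable uniformly in $n$, $G$ is lower semicomputable, and by the Monotone Convergence Theorem $\int_{\R}G\,d\lambda=\sum_n\|f_{n+1}-f_n\|_1<\infty$, so $G$ is a \ml integral test. Hence $G(x)<\infty$ at every \ml random $x$, which gives absolute, hence ordinary, convergence of $f_n(x)$ to $f(x)$ and lets me choose, for any target error, an $n$ with $|f_n(x)-f(x)|$ as small as desired.

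The Poisson tail term is the crux: for \ml random $x$ I need $P[|f-f_n|](x,y)$ small for all small $y$ once $n$ is large, i.e.\ that the Poisson maximal function $P^*(f-f_n)(x)=\sup_{y>0}P[|f-f_n|](x,y)$ tends to $0$ along a suitable sequence of $n$. In the $L^1(\R)$-computable case this was Lemma \ref{lm:schnorr}: the weak-type estimate of the Poisson Maximal Theorem (Theorem \ref{thm:PMT}) converts the geometric bound $\|f_i-f_{i+1}\|_1\le2^{-i}$ into a Schnorr test. For weakly computable $f$ this geometric decay is unavailable, and a Cauchy--Schwarz computation shows that from the weak-type inequality alone one cannot choose thresholds $\theta_i$ with both $\sum_i\|f_{i+1}-f_i\|_1/\theta_i<\infty$ (to bound the measure) and $\sum_i\theta_i<\infty$ (to bound the tail) unless $\sum_i\|f_{i+1}-f_i\|_1^{1/2}<\infty$, which weak computability does not guarantee; moreover $P^*$ is only of weak type $(1,1)$ and is not integrable, so it is not itself an integral test. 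To get around this I would invoke Miyabe's Theorem \ref{ml.equiv} to write $f=T_1-T_2$ almost everywhere with $T_1,T_2$ nonnegative integral tests; then $P[f]=P[T_1]-P[T_2]$ and $f(x)=T_1(x)-T_2(x)$ at \ml random $x$, reducing everything to the single claim that $\lim_{y\to0^+}P[T](x,y)=T(x)$ at every \ml random $x$ for a nonnegative integral test $T$.

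For this reduced claim the lower bound is easy: approximating $T$ from below by an increasing sequence $g_k\in C_c(\R)$ with $g_k\nearrow T$ and using $\lim_{y\to0^+}P[g_k](x,y)=g_k(x)$ together with positivity of the Poisson kernel gives $\liminf_{y\to0^+}P[T](x,y)\ge\sup_k g_k(x)=T(x)$ for every $x$. The hard part will be the matching upper bound $\limsup_{y\to0^+}P[T](x,y)\le T(x)$ at \ml random $x$. Writing $h_k=T-g_k\searrow0$, this is equivalent to showing $\inf_k P^*h_k(x)=0$ at \ml random $x$; the sets $\{P^*h_k>\beta\}$ have measure at most $3\|h_k\|_1/\beta\to0$, so the offending set is null, but because $\|h_k\|_1\to0$ carries no computable modulus I cannot cover it by a Schnorr test, and the non-integrability of $P^*h_k$ blocks the direct integral-test route as well. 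I expect this to be the main obstacle, and I anticipate it must be resolved through the integral-test characterization of \ml randomness and the layerwise structure underlying Miyabe's results, exploiting that $T(x)<\infty$ at \ml random $x$ to certify the vanishing of the maximal tails.
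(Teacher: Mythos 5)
Your proposal is not a proof: the one step that carries the entire content of the theorem is left open. Everything you do establish is correct --- the three-term decomposition, the integral test $G=\sum_n|f_{n+1}-f_n|$ for the value term (which indeed reproves Corollary \ref{ml.a.e.}), the reduction via Theorem \ref{ml.equiv} to the claim that $\lim_{y\to0^+}P[T](x,y)=T(x)$ for nonnegative \ml integral tests $T$, and the lower bound $\liminf_{y\to0^+}P[T](x,y)\ge T(x)$ from lower semicontinuity --- but all of this is the routine part. The upper bound $\limsup_{y\to0^+}P[T](x,y)\le T(x)$ at \ml random $x$, which you defer to ``the integral-test characterization and the layerwise structure underlying Miyabe's results,'' is precisely where the theorem lives, and deferring it leaves a genuine gap.

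That said, your diagnosis of the crux is more careful than the paper's own proof, which disposes of the tail term with an invalid quantifier exchange: from $|P[f-f_n](x,y)|\le\frac{1}{\pi y}\norm{f-f_n}_1$, which gives $\lim_n|P[f-f_n](x,y)|=0$ for each \emph{fixed} $y$, it concludes that there is an $N_1$ with $|P[f-f_n](x,y)|<\epsilon/3$ for all $n\ge N_1$ and \emph{all} $y>0$ simultaneously; since the bound blows up as $y\to0^+$, no such uniformity follows --- this is exactly the maximal-function control you identify as missing. Moreover, the obstacle is not a missing idea but a counterexample, so the gap cannot be filled. For nonnegative $T$, the estimate $P_y(s)\ge\frac{1}{2\pi y}$ for $|s|\le y$ gives
\[
P[T](x,y)\ \ge\ \frac{1}{\pi}\cdot\frac{1}{2y}\int_{x-y}^{x+y}T\,d\lambda ,
\]
so radial convergence of $P[T]$ to $T(x)$ forces the ball averages of $T$ at $x$ to have limit superior at most $\pi T(x)$. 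Now take $x$ \ml random but not a density-one point of some $\Pi^0_1$ class $C\ni x$; such $x$ exist (e.g., Turing-complete \ml randoms fail even positive density in some $\Pi^0_1$ class containing them, by Bienvenu--H\"olzl--Miller--Nies), and set $T=\chi_{(a,b)\setminus C}$ with $a<x<b$ rational. Then $T$ is a \ml integral test with $T(x)=0$, yet
\[
\limsup_{y\to0^+}P[T](x,y)\ \ge\ \frac{1}{\pi}\Bigl(1-\liminf_{y\to0^+}\tfrac{\lambda(C\cap[x-y,x+y])}{2y}\Bigr)\ >\ 0 .
\]
By Theorem \ref{ml.equiv}, $T$ is ML-equivalent to a weakly $L^1(\R)$-computable $g$; ML-equivalence gives $g=T$ a.e., hence $P[g]\equiv P[T]$ on the upper half-plane, while $g(x)=T(x)=0$. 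So $g$ refutes the statement at $x$. In other words, the property at issue is the Poisson analogue of the Lebesgue differentiation theorem for integral tests and characterizes density randomness rather than \ml randomness: your proposal stalls exactly where it must, and the paper's proof is unsound at the very step you flagged.
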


\begin{proof}
    Let $x\in\R$ be \ml random, and let $f\in L^1(\R)$ be weakly $L^1(\R)$ computable. Then, there exists a computable sequence $\{f_n\}_{n}$ of compactly supported piecewise linear functions with rational vertices such that $f(t)=\lim_nf_n(t)$ for almost all $t\in\R$ and $\sum_n\norm{f_n-f_{n+1}}_1<\infty$. By Corollary \ref{ml.a.e.}, $f(t)=\lim_nf_n(t)$ for all \ml random $t\in\R$. For each $y>0$, note that
    \[
    \left|P[f](x,y)-P[f_n](x,y)\right|=\left|\int_{\R}P_y(x-t)(f-f_n)(t) \ dt\right|\leq\dfrac{1}{\pi y}\norm{f-f_n}_1.
    \]
    Since $\sum_n\norm{f_n-f_{n+1}}_1<\infty$ and $f=\lim_nf_n$ pointwise a.e., $\lim_{n\rightarrow\infty}\norm{f-f_n}_1 \linebreak = 0$. Thus, $\lim_n|P[f-f_n](x,y)|=0$ for all $(x,y)\in\UHP$.

    Now, fix $\epsilon>0$. Since $\lim_n|P[f-f_n](x,y)|=0$ for all $y>0$, there exists an $N_1\in\N$ such that for all $n\geq N_1$ and all $y>0$, $|P[f-f_n](x,y)|<\epsilon/3$. Since $f(x)=\lim_nf_n(x)$, there exists an $N_2\in\N$ such that for all $n\geq N_2$, $|f_n(x)-f(x)|<\epsilon_3$. Let $N=\max\{N_1,N_2\}$. Since $f_N$ is compactly supported, $\lim_{y\rightarrow0^+}P[f_N](x,y)=f_N(x)$. Thus, there exists a $\delta>0$ such that if $0<y<\delta$, we have $|P[f_N](x,y)-f_N(x)|<\epsilon/3$. Therefore, $0<y<\delta$ implies
    \begin{align*}
        |P[f](x,y)-f(x)|&\leq|P[f-f_N](x,y)|\\
        &\hspace{0.5in}+|P[f_N](x,y)-f_N(x)|\\
        &\hspace{1in}+|f_N(x)-f(x)|\\
        &<3\cdot\dfrac{\epsilon}{3}=\epsilon.
    \end{align*}
    Since $\epsilon>0$ was arbitrary, it follows that $\lim_{y\rightarrow0^+}P[f](x,y)=f(x)$.
\end{proof}

\begin{theorem}
    If $x\in\R$ is not Martin-L\"{o}f random, then there is a weakly $L^1(\R)$-computable $f\in L^1(\R)$ such that $\lim_{y\rightarrow0^+}P[f](x,y)\neq f(x)$.
\end{theorem}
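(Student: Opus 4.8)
The plan is to mirror the proof of Theorem~\ref{thm:poisson.2}, replacing the Schnorr test with a \ml test and weakening the conclusion from ``$L^1(\R)$-computable'' to ``weakly $L^1(\R)$-computable.'' Since $x$ is not \ml random, I would fix a nested \ml test $\{U_k\}_k$ with $\lambda(U_k)\leq 2^{-k}$ and $x\in\bigcap_k U_k$. Writing $I_k=[-k,k]$ as before, set
\[
f_m=\sum_{k=0}^{m}2^{-k}\chi_{I_{k+1}\setminus U_m},\qquad f=\lim_m f_m=\sum_{k=0}^{\infty}2^{-k}\chi_{I_{k+1}\setminus\bigcap_n U_n},
\]
taking the pointwise limit as the distinguished representative of $f$. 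Because $\lambda(\bigcap_n U_n)=0$, this $f$ agrees almost everywhere with $\sum_k 2^{-k}\chi_{I_{k+1}}\in L^1(\R)$, so $f\in L^1(\R)$.

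Two ingredients transfer essentially unchanged from Theorem~\ref{thm:poisson.2}. First, since the test is nested and $x\in\bigcap_k U_k$, we have $x\in U_m$ for every $m$, whence $f_m(x)=0$ for all $m$ and therefore $f(x)=0$ for the pointwise-limit representative. Second, the Poisson lower bound goes through verbatim: with $K=\lfloor|x|\rfloor+1$, the function $f$ equals $2-2^{-K}$ on $[x-\tfrac{y}{2},x+\tfrac{y}{2}]\setminus\bigcap_n U_n$, and choosing $m$ with $\lambda(U_m)\leq y/4$ gives $\lambda([x-\tfrac{y}{2},x+\tfrac{y}{2}]\setminus U_m)\geq 3y/4$; combined with $P_y(s)\geq \tfrac{4}{5\pi y}$ for $|s|\leq y/2$, this yields $\Poisson{f}\geq \tfrac{3(2-2^{-K})}{5\pi}>0=f(x)$ for every $y>0$. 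Hence $\lim_{y\to 0^+}\Poisson{f}\neq f(x)$, which is the desired conclusion once weak $L^1(\R)$-computability of $f$ is established.

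The main obstacle is therefore establishing that $f$ is \emph{weakly} $L^1(\R)$-computable, i.e.\ producing a \emph{computable} sequence of rational piecewise linear functions with summable $L^1$-differences that converges to $f$. In Theorem~\ref{thm:poisson.2} the sets $U_k$ had computable measure, which is exactly what made each $f_m$ an $L^1(\R)$-computable point; here $U_m$ is only $\Sigma^0_1$, so its measure need not be computable and the indicator $\chi_{I_{k+1}\setminus U_m}$ need not be $L^1(\R)$-computable. The point is that weak computability asks only for a computable approximating sequence with summable differences, not for a computable modulus. I would exploit this as follows: enumerate the rational intervals defining each $U_k$, let $U_m[s]$ be the union of the first $s$ of them, and build a rational piecewise linear function $f_{m,s}$ realizing $\sum_{k\le m}2^{-k}\chi_{I_{k+1}\setminus U_m[s]}$ up to trapezoidal ramps, with total $L^1$-error at most $2^{-s}$. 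Since $\lambda(U_m\setminus U_m[s])\to 0$ as $s\to\infty$ and, using $\lambda(U_m)\leq 2^{-m}$, the genuine differences satisfy $\norm{f_{m+1}-f_m}_1=O(m\,2^{-m})$, an interleaving of the indices $m$ and $s$ yields a single computable sequence of rational piecewise linear functions converging to $f$ whose consecutive differences are summable (the truncation errors contribute $\sum_s 2^{-s}$ and the honest differences contribute $\sum_m O(m\,2^{-m})$). This exhibits $f$ as a weakly $L^1(\R)$-computable function and completes the argument.
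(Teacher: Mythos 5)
Your $L^1$ bookkeeping is essentially fine: the bounds $\norm{f_{m,s}-f_m}_1\leq 2\lambda(U_m)\leq 2^{-m+1}$ and $\norm{f_{m+1}-f_m}_1=O(m2^{-m})$ do make a diagonal-type interleaving $g_j=f_{m_j,s_j}$ into a computable sequence with summable $L^1$-differences converging in $L^1$ to (the class of) $f$, and the Poisson lower bound transfers verbatim from Theorem \ref{thm:poisson.2}. The genuine gap is the claim ``$f(x)=0$.'' In this paper's framework (Miyabe's, cf.\ Theorem \ref{ml.equiv} and Corollary \ref{ml.a.e.}, and the proof of the preceding positive theorem), the pointwise value of a weakly $L^1(\R)$-computable function at a point is read off from the \emph{computable witnessing sequence}: $f(x)=\lim_j g_j(x)$ if this limit exists. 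This is the only reading under which the statement is non-trivial, since the $L^1$-class of $f$ does not determine $f(x)$, and otherwise one could take any weakly computable function and simply redefine a representative at the single point $x$. Your ``distinguished representative'' $\sum_k 2^{-k}\chi_{I_{k+1}\setminus\bigcap_n U_n}$ is the pointwise limit of the \emph{ideal} sequence $f_m=\sum_{k\leq m}2^{-k}\chi_{I_{k+1}\setminus U_m}$, which is exactly the sequence you cannot compute (because $\lambda(U_m)$ is only lower semicomputable); it is not your witness. For the witness $g_j$, nothing guarantees $g_j(x)\to 0$: at stage $j$ the construction cannot know that $x\in U_{m_j}$, since the intervals covering $x$ may enter the enumeration of $U_{m_j}$ only after position $s_j$, and this can happen for every $j$ on any fixed computable schedule (the speed at which a fixed non-random $x$ is covered by a fixed test is not computably bounded). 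In that scenario $g_j(x)$ equals, up to ramps, the full sum $2-2^{-K}$ for all large $j$, so $f(x)=\lim_j g_j(x)=2-2^{-K}$, which (say, for $|x|\notin\Z$, where $f$ is a.e.\ locally constant near $x$) is precisely $\lim_{y\to0^+}\Poisson{f}$ --- and the desired inequality fails for your presentation.

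This is exactly the difficulty the paper's proof is built around, and it resolves it by abandoning the goal $f(x)=0$: instead of trying to force the witnessing sequence to converge to $0$ at $x$ (which a computable construction cannot do, for the reason above), the paper makes the witnessing sequence \emph{oscillate} at $x$. Its approximants are identically $0$ at even stages and sums of tent functions supported inside $U_n$ at odd stages, so that the odd-stage values at $x$ are positive infinitely often while the even-stage values are $0$; hence $\lim_s f_s(x)$ does not exist, $f(x)$ is undefined, and since $f=0$ a.e.\ one gets $\Poisson{f}\equiv 0\neq f(x)$. To salvage your construction you would need to do something similar --- e.g.\ interleave your $f_{m,s}$ with functions that destroy pointwise convergence on all of $\bigcap_n U_n$ --- that is, aim for ``$f(x)$ undefined'' rather than ``$f(x)=0$.''
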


\begin{proof}
    Suppose $x\in\R$ is not Martin-L\"{o}f random. Then there is a nested universal \ml test $\{U_n\}_{n}$ such that $x\in\bigcap_nU_n$. Let $\I_n$ be the list of all rational closed sets contained in $U_n$ for each $n$. Let $\I_n[s]$ consist of the first $s$ rational closed intervals enumerated into $U_n$.
    
    For each $I=[a,b]\in\I_n$, let $\tau_{n,I}$ be the function given by
    \[
    \tau_{n,I}(t)=\begin{cases}
        1 & \mathrm{if} \ t\in[\frac{3a+b}{4},\frac{a+3b}{4}],\\
        \frac{4}{b-a}(t-a) & \mathrm{if} \ t\in(a,\frac{3a+b}{4}),\\
        \frac{4}{a-b}(t-b) & \mathrm{if} \ t\in(\frac{a+3b}{4},b),\\
        0 & \mathrm{otherwise}.
    \end{cases}
    \]
    If $s=2n$, let $f_s=0$. If $s=2n+1$, let $f_s=\sum_{I\in\I_n[s]}\tau_{n,I}$. Let $f=\lim_sf_s$.

    Observe that, if $s=2n+1$, then
    \[
    \norm{f_s}_1=\norm{\sum_{I\in\I_n[s]}\tau_{n,I}}_1\leq\sum_{I\in\I_n[s]}\lambda(I)\leq(2n+1)\lambda(U_n)\leq\dfrac{2n+1}{2^n}
    \]
    Thus, $\lim_s\norm{f_s}_1=0$. It follows that $f=0$ a.e. Since $\{f_s\}_{s\in\N}$ is a computable sequence in $L^1(\R)$ with
    \[
    \sum_{s=0}^{\infty}\norm{f_s-f_{s+1}}\leq\sum_{n=0}^{\infty}\frac{2n+1}{2^n}<\infty,
    \]
    It follows that $f$ is weakly $L^1(\R)$-computable. 
    
    Now, note that there are infinitely many odd stages $s$ for which $f_s(x)>0$, while $f_s(x)=0$ at every even stage $s$. As a result, $\lim_sf_s(x)$ does not exist. Thus, $f(x)$ is undefined. However, since $f=0$ a.e., it follows that $P[f](t,y)=(P_y*f)(t)=0$ for all $(t,y)\in\UHP$. Therefore,
    \[
        \lim_{y\rightarrow0^+}P[f](x,y)=0\neq f(x).
    \]
\end{proof}

\section*{Acknowledgements}
The first author was supported in part by Simons Foundation Collaboration Grant \#420806, which also provided travel support to the third author. The authors would like to thank Laurent Bienvenu for many helpful discussions.

\bibliographystyle{plain}
\bibliography{analysis,bibliography,random,temp-references,universal}

@book{grafakos14,
    author = {Grafakos, L.},
    title = {Classical Fourier Analysis},
    publisher = {Springer New York, NY},
    year = {2014},
    DOI = {10.1007/978-1-4939-1194-3},
    URL = {https://doi.org/10.1007/978-1-4939-1194-3}
}

@article{carleson66,
	Author = {Carleson, Lennart},
	Fjournal = {Acta Mathematica},
	Issn = {0001-5962},
	Journal = {Acta Math.},
	Mrclass = {42.11},
	Mrnumber = {0199631 (33 \#7774)},
	Mrreviewer = {J.-P. Kahane},
	Pages = {135--157},
	Title = {On convergence and growth of partial sums of {F}ourier series},
	Volume = {116},
	Year = {1966},
    DOI = {10.1007/BF02392815},
    URL = {https://doi.org/10.1007/BF02392815}}

@article{fm23,
    AUTHOR = {Fouch\'e, Willem L. and Mukeru, Safari},
     TITLE = {On local times of {M}artin-{L}\"of random {B}rownian motion},
   JOURNAL = {Theoret. Comput. Sci.},
  FJOURNAL = {Theoretical Computer Science},
    VOLUME = {979},
      YEAR = {2023},
     PAGES = {Paper No. 114199, 15},
      ISSN = {0304-3975,1879-2294},
   MRCLASS = {68Q30 (03D32 60J55 60J65)},
  MRNUMBER = {4652854},
MRREVIEWER = {Manlio\ Valenti},
       DOI = {10.1016/j.tcs.2023.114199},
       URL = {https://doi.org/10.1016/j.tcs.2023.114199},
}

@article{f15,
    AUTHOR = {Fouch\'e, Willem L.},
     TITLE = {Kolmogorov complexity and the geometry of {B}rownian motion},
   JOURNAL = {Math. Structures Comput. Sci.},
  FJOURNAL = {Mathematical Structures in Computer Science. A Journal in the
              Applications of Categorical, Algebraic and Geometric Methods
              in Computer Science},
    VOLUME = {25},
      YEAR = {2015},
    NUMBER = {7},
     PAGES = {1590--1606},
      ISSN = {0960-1295,1469-8072},
   MRCLASS = {03D32 (03E15 60B05 60J65)},
  MRNUMBER = {3391065},
MRREVIEWER = {Bj\o rn\ Kjos-Hanssen},
       DOI = {10.1017/S0960129513000273},
       URL = {https://doi.org/10.1017/S0960129513000273},
}

@article{fmd14,
    AUTHOR = {Fouch\'e, Willem L. and Mukeru, Safari and Davie, George},
     TITLE = {Fourier spectra of measures associated with algorithmically
              random {B}rownian motion},
   JOURNAL = {Log. Methods Comput. Sci.},
  FJOURNAL = {Logical Methods in Computer Science},
    VOLUME = {10},
      YEAR = {2014},
    NUMBER = {3},
     PAGES = {3:20, 24},
      ISSN = {1860-5974},
   MRCLASS = {68Q30 (03D32 28A78 60G15)},
  MRNUMBER = {3263872},
MRREVIEWER = {Peter\ W.\ Day},
       DOI = {10.2168/LMCS-10(3:20)2014},
       URL = {https://doi.org/10.2168/LMCS-10(3:20)2014},
}

@incollection{hunt68,
	Author = {Hunt, Richard A.},
	Booktitle = {Orthogonal {E}xpansions and their {C}ontinuous {A}nalogues ({P}roc. {C}onf., {E}dwardsville, {I}ll., 1967)},
	Mrclass = {42.11},
	Mrnumber = {0238019 (38 \#6296)},
	Mrreviewer = {J.-P. Kahane},
	Pages = {235--255},
	Publisher = {Southern Illinois Univ. Press, Carbondale, Ill.},
	Title = {On the convergence of {F}ourier series},
	Year = {1968}}

@article{NRS82,
    author = {Nagel, Alexander and Rudin, Walter and Shapiro, Joel H.},
    title = {Tangential boundary behavior of functions in {D}irichlet-type spaces},
    journal = {Annals of Mathematics},
    year = {1982},
    volume = {116},
    issue = {},
    pages = {331-360},
    DOI = {10.2307/2007064},
    URL = {https://doi.org/10.2307/2007064}
}

@unpublished{S17,
    author = {Shapiro, Joel H.},
    title = {The Maximal Function and the {D}irichlet Problem},
    note = { \ Seminar Notes, Portland State University},
    year = {2017},
}

@preamble{"\def\cprime{$'$} "}

@article{fkns14,
    AUTHOR = {Freer, Cameron and Kjos-Hanssen, Bj\"{o}rn and Nies, Andr\'e{}
              and Stephan, Frank},
     TITLE = {Algorithmic aspects of {L}ipschitz functions},
   JOURNAL = {Computability},
  FJOURNAL = {Computability. The Journal of the Association CiE},
    VOLUME = {3},
      YEAR = {2014},
    NUMBER = {1},
     PAGES = {45--61},
      ISSN = {2211-3568,2211-3576},
   MRCLASS = {03D32},
  MRNUMBER = {3216774},
       DOI = {10.3233/com-14025},
       URL = {https://doi.org/10.3233/com-14025},
}

@article{bdhms12,
	Author = {Bienvenu, Laurent and Day, Adam R. and Hoyrup, Mathieu and Mezhirov, Ilya and Shen, Alexander},
	Doi = {10.1016/j.ic.2011.10.006},
	Fjournal = {Information and Computation},
	Issn = {0890-5401},
	Journal = {Inform. and Comput.},
	Mrclass = {03D32 (28D05 37A30 68Q30)},
	Mrnumber = {2878799 (2012m:03108)},
	Mrreviewer = {Liang Yu},
	Pages = {21--30},
	Title = {A constructive version of {B}irkhoff's ergodic theorem for {M}artin-{L}\"of random points},
	Url = {http://dx.doi.org.ezproxy.hofstra.edu/10.1016/j.ic.2011.10.006},
	Volume = {210},
	Year = {2012},
	Bdsk-Url-1 = {http://dx.doi.org.ezproxy.hofstra.edu/10.1016/j.ic.2011.10.006},
	Bdsk-Url-2 = {https://doi.org/10.1016/j.ic.2011.10.006}}

@book{dhbook,
	Author = {Rodney G. Downey and Denis R. Hirschfeldt},
	Publisher = {Springer},
	Title = {Algorithmic {R}andomness and {C}omplexity},
	Year = {2010},
    DOI = {10.1007/978-0-387-68441-3},
    URL = {https://doi.org/10.1007/978-0-387-68441-3}
}

@article{fgmn12,
	Author = {Johanna N.Y. Franklin and Noam Greenberg and Joseph S. Miller and Keng Meng Ng},
	Fjournal = {Proceedings of the American Mathematical Society},
	Journal = {Proc. Amer. Math. Soc.},
	Number = {10},
	Pages = {3623--3628},
	Title = {Martin-{L}\"of random points satisfy {B}irkhoff's ergodic theorem for effectively closed sets},
	Volume = {140},
	Year = {2012},
    DOI = {10.1090/S0002-9939-2012-11179-7},
    URL = {https://doi.org/10.1090/S0002-9939-2012-11179-7}}

@article{fmr19,
	Author = {Johanna N.Y. Franklin and Timothy H. McNicholl and Jason Rute},
     TITLE = {Algorithmic randomness and {F}ourier analysis},
   JOURNAL = {Theory Comput. Syst.},
  FJOURNAL = {Theory of Computing Systems},
    VOLUME = {63},
      YEAR = {2019},
    NUMBER = {3},
     PAGES = {567--586},
      ISSN = {1432-4350},
   MRCLASS = {03D32 (03D78 42B10)},
  MRNUMBER = {3935052},
MRREVIEWER = {Satyadev Nandakumar},
       DOI = {10.1007/s00224-018-9888-8},
       URL = {https://doi-org.ezproxy.hofstra.edu/10.1007/s00224-018-9888-8},
}

@book {fpbook,
     TITLE = {Algorithmic {R}andomness---{P}rogress and {P}rospects},
    SERIES = {Lecture Notes in Logic},
    VOLUME = {50},
    EDITOR = {Franklin, Johanna N.Y.\ and Porter, Christopher P.},
 PUBLISHER = {Cambridge University Press, Cambridge; Association for
              Symbolic Logic, Ithaca, NY},
      YEAR = {2020},
     PAGES = {x+359},
      ISBN = {978-1-108-47898-4},
   MRCLASS = {03-06 (03D32)},
  MRNUMBER = {4382437},
       DOI = {10.1017/9781108781718},
       URL = {https://doi.org/10.1017/9781108781718},
}

@article{ft-mp,
	Author = {Franklin, Johanna N.Y. and Towsner, Henry},
	Fjournal = {Moscow Mathematical Journal},
	Journal = {Mosc. Math. J.},
	Number = {4},
	Pages = {711--744},
	Title = {Randomness and non-ergodic systems},
	Volume = {14},
	Year = {2014},
    DOI = {10.17323/1609-4514-2014-14-4-711-744},
    URL = {https://doi.org/10.17323/1609-4514-2014-14-4-711-744}
}

@article{hoyrup13,
	Author = {Hoyrup, Mathieu},
	Doi = {10.1016/j.apal.2012.11.005},
	Fjournal = {Annals of Pure and Applied Logic},
	Issn = {0168-0072},
	Journal = {Ann. Pure Appl. Logic},
	Mrclass = {03D78 (03D32 37A05)},
	Mrnumber = {3022749},
	Number = {5},
	Pages = {542--549},
	Title = {Computability of the ergodic decomposition},
	Url = {http://dx.doi.org.ezproxy.hofstra.edu/10.1016/j.apal.2012.11.005},
	Volume = {164},
	Year = {2013},
	Bdsk-Url-1 = {http://dx.doi.org.ezproxy.hofstra.edu/10.1016/j.apal.2012.11.005},
	Bdsk-Url-2 = {https://doi.org/10.1016/j.apal.2012.11.005}}

@article{ml66,
	Author = {Martin-L{\"o}f, Per},
	Fjournal = {Information and Computation},
	Issn = {0890-5401},
	Journal = {Information and Control},
	Mrclass = {94.20 (60.00)},
	Mrnumber = {MR0223179 (36 \#6228)},
	Mrreviewer = {F. J. Beutler},
	Pages = {602--619},
	Title = {The definition of random sequences},
	Volume = {9},
	Year = {1966},
    DOI = {10.1016/S0019-9958(66)80018-9},
    URL = {https://doi.org/10.1016/S0019-9958(66)80018-9}
}

@article{miyabe13,
	Author = {Miyabe, Kenshi},
	Fjournal = {Computability},
	Issn = {2211-3568},
	Journal = {Computability},
	Mrclass = {03D32 (03F60)},
	Mrnumber = {3123251},
	Mrreviewer = {Mariko Yasugi},
	Number = {1},
	Pages = {15--29},
	Title = {{$L\sp 1$}-computability, layerwise computability and {S}olovay reducibility},
	Volume = {2},
	Year = {2013},
    DOI = {10.3233/COM-13015},
    URL = {https://doi.org/10.3233/COM-13015}}

@article{mnz16,
	Author = {Miyabe, Kenshi and Nies, Andr\'e and Zhang, Jing},
	Doi = {10.1017/bsl.2016.10},
	Fjournal = {The Bulletin of Symbolic Logic},
	Issn = {1079-8986},
	Journal = {Bull. Symb. Log.},
	Mrclass = {03D32 (37A25 60F15 60G46)},
	Mrnumber = {3569255},
	Mrreviewer = {Johanna N. Y. Franklin},
	Number = {3},
	Pages = {305--331},
	Title = {Using almost-everywhere theorems from analysis to study randomness},
	Url = {https://doi-org.ezproxy.hofstra.edu/10.1017/bsl.2016.10},
	Volume = {22},
	Year = {2016},
	Bdsk-Url-1 = {https://doi-org.ezproxy.hofstra.edu/10.1017/bsl.2016.10},
	Bdsk-Url-2 = {https://doi.org/10.1017/bsl.2016.10}}

@phdthesis{rutethesis,
	author={Rute, Jason},
	title={Topics in Algorithmic Randomness and Computable Analysis},
	school={Carnegie Mellon University},
	year={2013},
    DOI = {10.1184/R1/6723938},
    URL = {https://doi.org/10.1184/R1/6723938}
}

@book{schnorr,
	Address = {Heidelberg},
	Author = {C. P. Schnorr},
	Date-Modified = {2019-06-28 18:47:17 +0000},
	Publisher = {Springer-Verlag},
	Series = {Lecture {N}otes in {M}athematics},
	Title = {Zuf\"{a}lligkeit und {W}ahrscheinlichkeit},
	Volume = {218},
	Year = {1971},
    DOI = {10.1007/BFb0112458},
    URL = {https://doi.org/10.1007/BFb0112458}}

@article{v97,
	Author = {V{\cprime}yugin, V. V.},
	Doi = {10.1137/S0040585X97975915},
	Fjournal = {Rossi\u\i skaya Akademiya Nauk. Teoriya Veroyatnoste\u\i\ i ee Primeneniya},
	Issn = {0040-361X},
	Journal = {Teor. Veroyatnost. i Primenen.},
	Mrclass = {60F05 (28D05 68Q30 94A15)},
	Mrnumber = {1453328 (99e:60073)},
	Number = {1},
	Pages = {35--50},
	Title = {Effective convergence in probability, and an ergodic theorem for individual random sequences},
	Url = {http://dx.doi.org/10.1137/S0040585X97975915},
	Volume = {42},
	Year = {1997},
	Bdsk-Url-1 = {http://dx.doi.org/10.1137/S0040585X97975915}}

@article{Pathak.Rojas.Simpson:2014,
	Author = {Pathak, Noopur and Rojas, Crist{{\'o}}bal and Simpson, Stephen G.},
	Date-Added = {2016-01-29 02:09:26 +0000},
	Date-Modified = {2016-01-29 02:09:26 +0000},
	Doi = {10.1090/S0002-9939-2013-11710-7},
	Fjournal = {Proceedings of the American Mathematical Society},
	Issn = {0002-9939},
	Journal = {Proc. Amer. Math. Soc.},
	Mrclass = {03D32 (26A24 37A30)},
	Mrnumber = {3119207},
	Mrreviewer = {Xi Zhong Zheng},
	Number = {1},
	Pages = {335--349},
	Title = {Schnorr randomness and the {L}ebesgue differentiation theorem},
	Url = {http://dx.doi.org/10.1090/S0002-9939-2013-11710-7},
	Volume = {142},
	Year = {2014},
	Bdsk-Url-1 = {http://www.ams.org/mathscinet-getitem?mr=3119207},
	Bdsk-Url-2 = {http://dx.doi.org/10.1090/S0002-9939-2013-11710-7}}

@book{o1,
  author = "Piergiorgio Odifreddi",
  title = "Classical Recursion Theory",
  series = {Studies in Logic and the Foundations of Mathematics},
  number = {125},
  publisher = {North-Holland},
  year = "1989"
}

@book{o2,
  author = "Piergiorgio Odifreddi",
  title = "Classical Recursion Theory, Volume II",
  series = {Studies in Logic and the Foundations of Mathematics},
  number = {143},
  publisher = {North-Holland},
  year = "1999"
}

@book{soare,
  author = "Robert I. Soare",
  title = "Turing Computability: Theory and Applications",
  series = {Theory and Applications of Computability},
  publisher = "Springer",
  year = "2016",
  DOI = "10.1007/978-3-642-31933-4",
  URL = "https://doi.org/10.1007/978-3-642-31933-4"
}

\end{document}